\documentclass[english]{article}

\usepackage{babel}

\usepackage[a4paper,top=2cm,bottom=2cm,left=3cm,right=3cm]{geometry}

\usepackage[textsize=footnotesize]{todonotes}
\usepackage{tikz-cd} 
\usepackage{amssymb,amsfonts,amsmath,amsthm}
\usepackage[colorlinks=false, allcolors=blue]{hyperref}

\usepackage{newfloat}
\DeclareFloatingEnvironment[fileext=lod,name=Diagram,placement=p]{fdiagram}

\usepackage{multirow}

\usepackage[boxed, noline, noend, linesnumbered]{algorithm2e}
\DontPrintSemicolon
\usepackage[T1]{fontenc}
\usepackage[scaled=0.85]{beramono}

\newcommand{\algkw}[1]{{\ttfamily\bfseries #1}}

\SetNlSty{algkw}{}{}

\SetKwInOut{KwIn}{\algkw{Input}}
\SetKwInOut{KwOut}{\algkw{Output}}
\SetKw{Ret}{\algkw{return}}
\SetKwProg{fn}{\algkw{function}}{}{}

\SetKwIF{If}{ElseIf}{Else}%
  {\algkw{if}}{\algkw{then}}{\algkw{else if}}{\algkw{else}}{\algkw{end if}}

\SetKwFor{For}{\algkw{for}}{\algkw{do}}{\algkw{end for}}
\SetKwFor{While}{\algkw{while}}{\algkw{do}}{\algkw{end while}}

\numberwithin{table}{section}
\numberwithin{fdiagram}{section}
\numberwithin{equation}{section}

\newtheorem{theorem}{Theorem}[section]

\newtheorem{corollary}[theorem]{Corollary}
\newtheorem{lemma}[theorem]{Lemma}
\newtheorem{proposition}[theorem]{Proposition}
\theoremstyle{definition}
\newtheorem{definition}[theorem]{Definition}

\newtheorem*{convention*}{Convention}
\newtheorem{example}[theorem]{Example}
\theoremstyle{remark}
\newtheorem{remark}[theorem]{Remark}
\newtheorem*{remark*}{Remark}
\newtheorem{problem}[theorem]{Problem}

\newcommand{\Z}{{\mathbb Z}}
\newcommand{\Q}{{\mathbb Q}}
\newcommand{\R}{{\mathbb R}}
\newcommand{\N}{{\mathbb N}}

\DeclareMathOperator{\Aut}{Aut}

\DeclareMathOperator{\Hol}{Hol}
\DeclareMathOperator{\Aff}{Aff}
\DeclareMathOperator{\GL}{GL}

\DeclareMathOperator{\Isom}{Isom}
\DeclareMathOperator{\Soc}{Soc}
\DeclareMathOperator{\diag}{diag}

\usepackage[noblocks]{authblk}

\usepackage[nobysame,bibtex-style, initials]{amsrefs}
\BibSpec{misc}{%
  +{}{\PrintAuthors} {author}
  +{.}{ \textit}     {title}
  +{,}{ }            {date}
  +{.}{ }            {url}
  +{,}{}             {note}
}

\begin{document}
\title{Cofinite integral braces and crystallographic groups}
\author{Rafa{\l} Lutowski}
\author{Andrzej Szczepa\'nski}
\affil{Institute of Mathematics, Faculty of Mathematics, Physics and Informatics, University of Gda\'nsk\par Wita Stwosza 57, 80-308 Gda\'nsk, Poland}
\author{Anna Zamojska-Dzienio}
\affil{Faculty of Mathematics and Information Science, Warsaw University of Technology, Koszykowa 75, 00-662 Warsaw, Poland}
\maketitle

\renewcommand{\thefootnote}{\fnsymbol{footnote}} 
\footnotetext{$^*$ Corresponding author: \url{rafal.lutowski@ug.edu.pl}}
\footnotetext{2020 \emph{Mathematics Subject Classification.} Primary: 17B38, Secondary: 20H15, 81R05, 57N45.}
\footnotetext{\emph{Keywords and phrases.} Cofinite integral brace, crystallographic group, Yang-Baxter equation, structure group.}
\renewcommand{\thefootnote}{\arabic{footnote}}

\begin{abstract}
A left brace is cofinite integral (CIB) if its additive group is free abelian of finite rank and its quotient by the socle is finite. Rump introduced such braces and showed that the multiplicative group of every CIB is a crystallographic group. We study CIBs from the point of view of crystallographic groups. We prove an analogue of the Bieberbach theorems: in each dimension there are only finitely many isomorphism classes of CIBs. For a fixed crystallographic group, we show that its CIB structures correspond to the orbits of a finite group action on a set of bijective 1‑cocycles. We use this description to develop algorithms that enumerate them. Finally, we give a complete census of CIBs on crystallographic and Bieberbach groups up to dimension 6.   
\end{abstract}
\section{Introduction}
Let $X$ be a non-empty (finite) set. A function $r:X\times X\to X\times X$ is a \emph{set-theoretical solution to the quantum Yang-Baxter equation} if

\begin{equation}\label{Eq:SetBraidEqn}
(r\times \text{id}_X)(\text{id}_X\times r)(r\times \text{id}_X)=(\text{id}_X\times r)(r\times \text{id}_X)(\text{id}_X\times r).
    \end{equation}
The term \emph{solution} refers to a set-map pair $(X, r)$ such that $r$ satisfies \eqref{Eq:SetBraidEqn}. For the solution $(X,r)$ we write
\[
r(x,y) = (\sigma_x(y), \tau_y(x)).
\] If for each $x,y\in X$, $\sigma_x,\tau_y\colon X\to X$ are bijective, then we say $(X, r)$ is  \emph{nondegenerate}. We call a solution $(X, r)$ \emph{involutive} if $r^2=\text{id}_{X\times X}$. 

In what follows we adopt the following convention:
\begin{convention*} All solutions we study in this paper are set-theoretical, non-degenerate and involutive so we will call them simply \emph{solutions}. Moreover, set $X$ in a solution $(X,r)$ is always finite.
\end{convention*}
Braces are ring-like structures, generalizing Jacobson radical rings, introduced by Rump \cite{Ru07} to study solutions of the Yang-Baxter equation. Here, we use the equivalent definition of a brace given in \cite{CJO14}. Namely, a (left) \emph{brace} is an algebraic structure $(A,+,\circ)$ consisting of an abelian group $(A,+)$ and a group $(A,\circ)$ connected by the compatibility condition 
\[a\circ (b+c)+a=a\circ b+a\circ c\]
that holds for all $a,b,c\in A$. The \emph{socle} of a left brace $A$ is 
\[\Soc(A)=\{a\in A\mid a+b=a\circ b \text{ for all }b\in A\}.\]
One can consider a quotient left brace $A/\Soc(A)$ on residue classes $\{\Soc(A)+a\colon a\in A\}$ with operations induced in a standard way from $(A,+,\circ)$ . 

In \cite{Ru24}, Rump introduces and investigates \emph{cofinite integral} braces (CIB), i.e., left braces $A$ such that $A/\Soc(A)$ is finite and the additive group of $A$ is free abelian of finite rank. In particular, he shows that the multiplicative group of a CIB $A$ is always crystallographic (see \cite{Ru24}*{Proposition 4}) and describes cofinite braces whose additive group is free abelian of rank one and two (\cite{Ru24}*{Section 5}). In dimension one, there are just two integral braces. Both are cofinite, with the infinite cyclic or the infinite dihedral group as multiplicative group.
In dimension two, there are 17 crystallographic groups (sometimes called wallpaper groups)
\cite{Sz25}. All are multiplicative groups of braces \cite{Ru24}*{Corollary of Theorem 3}, but only 12 of them can be obtained from CIB \cite{Ru24}*{Theorem 3}. Furthermore, all ten torsion-free 3-dimensional crystallographic groups are multiplicative groups of CIBs \cite{Ru24}*{Theorem 4}.

In this paper we study general relations between CIB and crystallographic groups of dimension $n\geq 3.$ The paper is organized as follows. 

In Section \ref{S:basic}, we review the necessary background on braces, solutions to the Yang--Baxter equation, and crystallographic groups. We also study two-sided cofinite integral braces and prove that any such brace is necessarily trivial, i.e., $A=\Soc(A)$ (Theorem~\ref{T:two-sided}). 

In Section \ref{S:isomorphism}, we analyze affine representations and bijective $1$-cocycles associated with cofinite integral braces. In particular, we establish a criterion showing that an $n$-dimensional crystallographic group $\Gamma$ defines a CIB if and only if its translational projection $\varphi\colon \Gamma \to \R^n$ is injective and $\varphi(\Gamma)$ is a subgroup of $\mathbb{R}^n$ (Proposition~\ref{P:criterion}). Using this framework, we verify that every generalized Hantzsche-Wendt group admits a cofinite integral brace structure (Lemma~\ref{lemma:hantzsche-wendt-existence}) and prove an analogue of the Bieberbach theorem showing that there are only finitely many isomorphism classes of CIBs in any given dimension (Theorem~\ref{T:Bieberbach}). 

In Section \ref{S:classification}, we provide a cohomological classification of cofinite integral braces with a fixed crystallographic multiplicative group $\Gamma$, establishing that their isomorphism classes correspond bijectively to the orbits of the action of the stabilizer $N_\gamma$ on the set of bijective $1$-cocycles whose images form groups (Theorem~\ref{theorem:classification}). 

In Section \ref{section:computer-calculations}, we develop computational algorithms to classify CIB structures on crystallographic groups. We establish criteria to bound the maximal exponent of the additive group of the quotient brace (Lemma~\ref{lemma:socle-restriction}, Algorithm~\ref{alg:maximal-exponents}), formulate a search procedure over the lattice of integer-valued coboundaries (Algorithm~\ref{alg:finding-cibs-with-exponent}), and present an effective $p$-subgroup reduction method for groups of composite order (Lemma~\ref{L:reduction}). 

In Section \ref{S:GHW}, we focus on generalized Hantzsche-Wendt groups. We show first that for every CIB structure on any such group $\Gamma$, the exponent of the additive group of the quotient brace $\Gamma/\mathbb{Z}^n$ is at most~4 (Proposition~\ref{proposition:exponent-in-hw-groups}). This is followed by the explicit classification demonstrating that there are exactly three isomorphism classes of cofinite integral braces on the three-dimensional Hantzsche-Wendt group (Theorem~\ref{T:3dHW}). 

In Section \ref{S:low_dim}, we present computational classification results for crystallographic and Bieberbach groups up to dimension~6 obtained via computer algebra system GAP \cite{GAP4} (Tables~\ref{table:cibs-on-crystallographic-groups} and~\ref{table:cibs-on-bieberbach-groups}). We confront these results with previous low-dimensional observations in \cite{Ru24}, pointing out new phenomena such as the existence of asymmorphic Bieberbach groups without cofinite integral brace structures in dimension~4 and higher (Remark~\ref{asymmorphic}). 

In Section \ref{S:conclusions}, we present concluding remarks and propose open problems.

\section{Basic definitions and properties}\label{S:basic}
\subsection{Braces}
For the convenience of the reader, we review in this section the necessary background on braces and crystallographic groups.
\begin{definition}[\cite{CJO14}]
A \emph{left brace} is a set $A$ with two operations $+$ and $\circ$ such that $(A, +)$ is an abelian group, $(A,\circ)$ is a group and
\begin{equation}\label{eq:leftBr}
a\circ( b + c ) + a = a\circ b + a\circ c ,
\end{equation}
for all $a,b,c\in A$. We call $(A, +)$ the \emph{additive group} and $(A,\circ)$ the \emph{multiplicative group} of the left brace.

A \emph{right brace} is deﬁned similarly, replacing condition \eqref{eq:leftBr} by
\begin{equation}\label{eq:rightBr}
( b + c )\circ a + a = b\circ a + c\circ a,
\end{equation}
for all $a,b,c\in A$.
\end{definition}

There is a bijective correspondence between left braces and right braces (see e.g. \cite{Cedo}*{Definition 2.2}). It is easy to check that in a left (or right) brace $A$, the identity element $1$ of the multiplicative
group $(A,\circ)$ is equal to the neutral element $0$ of the additive group $(A,+)$. A \emph{two-sided brace} is a left brace that is also a right brace. Given any abelian group $(A,+)$, it is easy to check that $(A,+,+)$ is a brace. Such a brace is called a \emph{trivial brace}.

Let $A$ and $B$ be left braces. A map $f \colon A \to B$ is a \emph{brace morphism} if it is a group homomorphism with respect to both operations, that is, $f(a+b) = f(a)+f(b)$ and $f(a \circ b) = f(a) \circ f(b)$ for all $a, b \in A$. If $f$ is bijective, it is called a \emph{brace isomorphism}.

An additive subgroup $I$ of a brace $A$ is said to be a {\em right ideal} 
if for every $a\in I$ and $b\in A$,
\[
a \circ b - b \in I.
\]
In particular, every right ideal $I$ is a subbrace of $A$ (that is, $a, b \in I$ implies $a+b \in I$ and $a \circ b \in I$). Moreover, if also $b \circ a - b \in I$ for all $a \in I$ and $b \in A$, then $I$ is called an \emph{ideal}. Equivalently, an ideal $I$ of $A$ is a right ideal such that $(I,\circ)$ is a normal subgroup of $(A,\circ)$.

The kernel $\ker f := f^{-1}(0)$ of a brace morphism $f:A\to B$ is an ideal,
and the image $f(A)$ is a subbrace of $B.$  Conversely, any ideal $I$ of a brace $A$ gives rise
to a surjective brace morphism $A\to A/I$ with kernel $I.$
As in the case of rings, the quotient brace $A/I$ consists of the residue classes $I + a = I\circ a,$ and
its operations are induced from those of $A.$

\begin{definition}[\cite{CJO14}]
The \emph{socle} of a left brace $A$ is the ideal defined by
\[\Soc(A)=\{a\in A\mid a+b=a\circ b \text{ for all }b\in A\}.\]
\end{definition}
\noindent
We shall also need the following definition.

\begin{definition}[\cite{Sz25}]
A \emph{crystallographic group} of dimension $n$ is a discrete subgroup of $\Isom(\R^n)\cong \mathrm{O}(n) \ltimes \mathbb{R}^n$ such that the orbit space $\R^n/\Gamma$ is compact. If $\Gamma$ is torsion-free, it is called a \emph{Bieberbach group}.
\end{definition}

\subsection{Solutions of the Yang-Baxter equation}

\begin{remark}
Recall that for the solution $(X,r)$ we write
\[
r(x,y) = (\sigma_x(y), \tau_y(x)),
\]
where both mappings $\sigma_x, \tau_y$ are bijections for all $x,y\in X$. Involutivity of a solution implies that it is enough to consider only the mappings $\sigma_x$, since one can use them and their inverses to express the mappings $\tau_x$ (\cite{Ru07}*{}).\end{remark}

\begin{definition}
The \emph{structure group} of a solution $(X,r)$ is a group $G(X,r)$ with the presentation
\[
G(X,r) := \langle X \mid xy = uv \Leftrightarrow r(x,y) = (u,v) \rangle.
\]
\end{definition}

\begin{theorem}[\cite{GIVB98}*{Theorem 1.6}]
If $(X,r)$ is a finite %
solution, then $G(X,r)$ is a Bieberbach group.
\end{theorem}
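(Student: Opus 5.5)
We prove that $G=G(X,r)$ is torsion-free, finitely generated, and contains a free abelian subgroup of finite index equal to its rank; Bieberbach's characterization then identifies $G$ with a Bieberbach group. Write $X=\{x_1,\dots,x_n\}$.

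\emph{Step 1: brace structure and the $1$-cocycle.} Using the presentation, define an action of $G$ on the free abelian group $\Z^X$ by extending $x\mapsto \sigma_x$. The relations $xy=uv$ for $r(x,y)=(u,v)$ are respected because $\sigma_x\sigma_y=\sigma_u\sigma_v$; this follows from the braid relation \eqref{Eq:SetBraidEqn} together with involutivity. Then define $\pi\colon G\to \Z^X$ by $\pi(x)=x$ and the cocycle rule $\pi(gh)=\pi(g)+g\cdot\pi(h)$. Well-definedness on relations amounts to $x+\sigma_x(y)=u+\sigma_u(v)$, i.e.\ $\{x,\sigma_x(y)\}=\{u,\sigma_u(v)\}$. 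This holds because $u=\sigma_x(y)$ and, by involutivity, $\sigma_u(v)=x$.

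\emph{Step 2: bijectivity of $\pi$.} This is the main obstacle. Surjectivity is done by induction on word length: $\pi(x^{\pm1}g)$ can be steered to reach every element of $\Z^X$, using that each $\sigma_x$ is bijective. Injectivity is harder. The standard route is to build the inverse map. Embed $\Z^X$ with the action into the affine group $\Z^X\rtimes\mathrm{Sym}(X)$ via $g\mapsto(\pi(g),\bar g)$. Then show that the subgroup generated by the images $(x,\sigma_x)$ already satisfies the defining relations, and that every element has a normal form. This is the Gateva-Ivanova--Van den Bergh / Etingof--Schedler--Soloviev argument, which uses the I-type (monoid of $\Z_{\ge0}$-vectors) structure. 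I would therefore prove injectivity through the monoid of I-type: show that the monoid is cancellative and Ore, so that $G$ is its group of fractions.

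\emph{Step 3: the finite-index translation subgroup and torsion-freeness.} Let $K=\ker(G\to \mathrm{Sym}(X))$. This is a finite-index normal subgroup, and on $K$ the map $\pi$ is a bijective homomorphism onto a subgroup of $\Z^X$. Since $\pi$ is a bijection onto $\Z^X$ and $K$ has finite index, $\pi(K)$ has finite index in $\Z^n$, so $K\cong\Z^n$. Via $g\mapsto(\bar g,\pi(g))$, $G$ then acts on $\R^n$ by affine maps with linear parts permutation matrices, hence isometries. The action is properly discontinuous and cocompact, because the translation lattice $\pi(K)$ is of full rank and of finite index. This makes $G$ crystallographic.

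For torsion-freeness, suppose $g^m=1$ with $g\ne1$. Then $\sum_{i<m}\bar g^{\,i}\pi(g)=0$. Write $\pi(g)=a-b$ in the monoid coordinates. Using the I-type degree function $\deg\colon G\to\Z$, which is a homomorphism with $\deg x=1$ and is well defined since the relations are homogeneous, we get $\deg g=0$. The permutation average of $\pi(g)$ then vanishes. I would finish with the Gateva-Ivanova--Van den Bergh argument: reduce to a cyclic orbit and use the injectivity of $\pi$ to force $\pi(g)=0$, hence $g=1$.
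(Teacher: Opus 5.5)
The paper gives no proof of this statement; it is quoted from \cite{GIVB98}. Your architecture is the standard one, and your Step 3 correctly yields a crystallographic group once $\pi$ is known to be a bijective $1$-cocycle. (A small inaccuracy in Step 2: the steps $v\mapsto v-y$ in your surjectivity induction need $x\mapsto\sigma_x^{-1}(x)$ to be a bijection of $X$. That uses involutivity and the bijectivity of the $\tau_y$, not just that of the $\sigma_x$.)

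\emph{Injectivity of $\pi$.} This step is left open. Cancellativity and the Ore condition do not give injectivity of $\pi$ by themselves. What you need is that $\pi$ is injective on the positive monoid $S$ (the I-type property), and you give no argument for it. The missing idea is the twist
\[
J(x_1,\dots,x_k)=(x_1,\sigma_{x_1}(x_2),\dots,\sigma_{x_1}\cdots\sigma_{x_{k-1}}(x_k))
\]
on $X^k$. It is bijective, since it is triangular. The value $\pi(x_1\cdots x_k)$ is the sum of the entries of $J(x_1,\dots,x_k)$. By your Step 1 identities $\sigma_x\sigma_y=\sigma_u\sigma_v$ and $\sigma_u(v)=x$, applying $r$ at positions $i,i+1$ corresponds under $J$ to swapping entries $i$ and $i+1$. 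Hence positive words with equal $\pi$ are connected by relation moves. From this, left cancellativity and the right Ore condition follow directly via $\pi$. The other sides follow from the opposite solution $(x,y)\mapsto(\tau_x(y),\sigma_y(x))$. So $G=S^{-1}S$, and the formula $\pi(b^{-1}a)=\bar b^{-1}(\pi(a)-\pi(b))$ then gives injectivity on $G$.

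\emph{Torsion-freeness.} This is the more serious gap, and it is exactly what separates ``Bieberbach'' from ``crystallographic''. The facts you extract, $\deg g=0$ and $\sum_{i<m}\bar g^{\,i}\pi(g)=0$, are not contradictory in themselves: the data $\bar g=(x_1\,x_2)$, $\pi(g)=x_1-x_2$ satisfy both. Nor can injectivity of $\pi$ alone ``force $\pi(g)=0$''. The cofinite integral brace on $\Z$ with multiplicative group the infinite dihedral group has a bijective cocycle and torsion, so the permutation nature of the action must be used.

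The missing argument runs as follows. The vanishing sum says that the coefficients of $w=\pi(g)$ add up to $0$ on each $\langle\bar g\rangle$-orbit of $X$. Telescoping along each cycle $y_0\mapsto y_1\mapsto\cdots\mapsto y_{\ell-1}\mapsto y_0$ of $\bar g$, set $v_{y_j}=w_{y_1}+\dots+w_{y_j}$. This gives an integral $v\in\Z^X$ with $v-\bar g v=w$. By surjectivity, $v=\pi(h)$ for some $h\in G$. Then $\pi(gh)=w+\bar g v=v=\pi(h)$, so $gh=h$ by injectivity, and therefore $g=1$.
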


\begin{theorem}[\cite{ESS99}*{Proposition 2.3}, \cite{ALV20}*{Theorem 2.1}]
Let $(X,r)$ with $|X|=n \in \mathbb{N}$ be a solution. %
Then there exists a monomorphism $G(X,r) \to O(n,\Z) \ltimes \Z^n \subset \GL(n+1,\Z)$, given by the formula
\[
x \mapsto (\sigma_x, t_x) \mapsto
\begin{bmatrix}
\sigma_x & t_x\\
0 & 1
\end{bmatrix}.
\]
\end{theorem}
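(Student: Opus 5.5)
Here $\sigma_x\colon X\to X$ is a permutation of the $n$-element set $X$, identified with its permutation matrix, and $t_x\in\Z^n$ will be the standard basis vector $e_x$. The strategy is to realize $G(X,r)$ as the multiplicative group of the structure brace $\Z^X$, and then identify that multiplicative group with a subgroup of $\operatorname{Sym}(X)\ltimes\Z^X$.

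Let $A=\Z^n=\bigoplus_{x\in X}\Z e_x$, with $\operatorname{Sym}(X)$ acting by permuting coordinates. By involutivity and nondegeneracy, $x\mapsto\sigma_x$ extends to a map $\lambda\colon A\to\operatorname{Sym}(X)$. We define a product on $A$ by $a\circ b=a+\lambda_a(b)$ and require $\lambda_{a\circ b}=\lambda_a\lambda_b$. The cleanest route, following Etingof--Schedler--Soloviev, is to construct the bijective $1$-cocycle directly.

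First, I show that $x\mapsto(\sigma_x,e_x)\in\operatorname{Sym}(X)\ltimes\Z^X$ respects the defining relations of $G(X,r)$. Suppose $r(x,y)=(u,v)$, so that $u=\sigma_x(y)$ and $v=\tau_y(x)$. Then
\[
(\sigma_x,e_x)(\sigma_y,e_y)=(\sigma_x\sigma_y,\ e_x+e_{\sigma_x(y)}),
\]
and likewise for $(u,v)$. We need two facts. The first is $\sigma_x\sigma_y=\sigma_u\sigma_v$, which is the first-coordinate component of the braid relation. The second is $e_x+e_u=e_u+e_{\sigma_u(v)}$, which holds because $\sigma_u(v)=x$ by involutivity: $r(u,v)=(x,y)$. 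So this defines a homomorphism $\Phi\colon G\to\operatorname{Sym}(X)\ltimes\Z^X\subset O(n,\Z)\ltimes\Z^n$, matching the displayed matrices.

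Second, I prove injectivity, which is the main obstacle. Write $\Phi(g)=(\lambda_g,\pi(g))$, so that $\pi\colon G\to\Z^X$ is a $1$-cocycle. It suffices to show that $\pi$ is bijective. For this I show that every element of $G$ can be written as a positive word times an inverse positive word, and use the left and right nondegeneracy to construct an inverse map $\Z^X\to G$. Concretely, I define $\psi(e_x)=x$ and extend by $\psi(a+b)=\psi(a)\,\psi(\lambda_{\psi(a)}^{-1}b)$. The substantive part is checking that this is well defined, by induction on word length using the relations. Then $\pi\circ\psi=\mathrm{id}$ follows from the cocycle identity, and surjectivity of $\psi$ follows because the generators $x$ and their inverses lie in the image; the inverses lie in the image by nondegeneracy, since $\lambda_x$ is a bijection of $X$. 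Once $\pi$ is bijective, $\ker\Phi\subseteq\ker\pi=1$.

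Alternatively, I would cite the equivalence between the structure brace and involutive solutions (Rump, or Cedó--Jespers--Okniński), which gives the brace $(\Z^X,+,\circ)$ with $\pi$ as the isomorphism. The matrix form is immediate from the semidirect product embedding, since permutation matrices lie in $O(n,\Z)$.
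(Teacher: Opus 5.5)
Your first step is correct: $\sigma_x\sigma_y=\sigma_u\sigma_v$ is the first coordinate of the braid relation, and $\sigma_u(v)=x$ follows from $r(u,v)=(x,y)$. So $x\mapsto(\sigma_x,e_x)$ extends to a homomorphism $\Phi$.

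The gap is injectivity, which is the entire content of the statement. The paper gives no argument of its own and simply cites \cite{ESS99} and \cite{ALV20}, so your closing citation matches what the paper does. Your own construction, however, does not establish injectivity:
\begin{itemize}
\item The rule $\psi(a+b)=\psi(a)\,\psi(\lambda_{\psi(a)}^{-1}b)$ is not a definition until you show that the right-hand side does not depend on how $c\in\Z^X$ is split as $a+b$. That independence is exactly what has to be proved, and you only assert it.
\item ``Induction on word length'' has no clear meaning for arbitrary splittings of vectors with negative entries. For instance, $0=e_x+(-e_x)$ forces $\psi(-e_{\sigma_x^{-1}(x)})=x^{-1}$. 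So well-definedness already requires $x\mapsto\sigma_x^{-1}(x)$ to be injective, which you never address.
\item ``The inverses lie in the image since $\lambda_x$ is a bijection'' is not an argument.
\item The normal form $w_1w_2^{-1}$ that you announce is never used.
\end{itemize}

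The argument can be completed by working only with positive words, where the induction is honest. Let $M$ be the monoid with the same presentation as $G(X,r)$. The relations are homogeneous, so length is defined on $M$, and it equals the coordinate sum of $\pi(w)\in\N^X$. By induction on length one proves two facts.
\begin{enumerate}
\item[(i)] If $\pi(w)$ has a positive $z$-coordinate, then $w=zw'$ in $M$. Write $w=xw_1$. If $z\neq x$, then $\pi(w_1)$ has a positive coordinate at $y=\sigma_x^{-1}(z)$, so $w_1=yw_2$ by induction. Since $xy=z\,\tau_y(x)$ in $M$, this gives $w=z\,\tau_y(x)\,w_2$.
\item[(ii)] $\pi$ is injective on $M$. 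If $\pi(xw_1)=\pi(w')$, then $w'=xw_1'$ by (i). Comparing $e_x+\sigma_x\pi(w_1)$ with $e_x+\sigma_x\pi(w_1')$ gives $\pi(w_1)=\pi(w_1')$, and induction applies.
\end{enumerate}
Next, for $x,u\in X$ put $y=\sigma_x^{-1}(u)$ and $v=\tau_y(x)$. Then $xy=uv$, i.e.\ $x^{-1}u=yv^{-1}$, so every $g\in G(X,r)$ can be written as $w_1w_2^{-1}$ with $w_1,w_2$ positive. If $\Phi(g)=1$, then $\Phi(w_1)=\Phi(w_2)$, hence $\pi(w_1)=\pi(w_2)$. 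By (ii), $w_1=w_2$ in $M$ and thus in $G(X,r)$, so $g=1$.

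This uses only the braid relation, involutivity and left nondegeneracy. It also avoids building $\psi$, equivalently the brace structure, on all of $\Z^X$, which the statement does not require.
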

\begin{remark}
\label{remark:notation}
Note that we identify $X$ with a basis of the free abelian group $\Z^n$. For every $x \in X$, $t_x$ is the translation by the basis vector $x$, and the permutation $\sigma_x$ is identified with its matrix in the basis $X$ of $\Z^n$.    
\end{remark}

\begin{lemma}
Let $(A,+,\circ)$ be a left brace, and let a map $\lambda \colon (A,\circ) \to \Aut(A,+)$ be given by $a \mapsto \lambda_a$, where
\[
\lambda_a(b) = -a + a \circ b,
\]
for every $a,b \in A$. Then $\lambda$ is a group homomorphism.
\end{lemma}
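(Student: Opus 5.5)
We must show three things: each $\lambda_a$ lies in $\Aut(A,+)$, i.e. it is additive and bijective, and $\lambda_{a\circ b}=\lambda_a\lambda_b$. All of these will follow from the compatibility condition \eqref{eq:leftBr} together with the fact, recalled above, that the multiplicative identity of $(A,\circ)$ coincides with the additive zero $0$.

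First, additivity. For $b,c\in A$ we compute
\[
\lambda_a(b+c) = -a + a\circ(b+c).
\]
By \eqref{eq:leftBr}, $a\circ(b+c) = a\circ b + a\circ c - a$. Hence
\[
\lambda_a(b+c) = (-a + a\circ b) + (-a + a\circ c) = \lambda_a(b)+\lambda_a(c),
\]
since $(A,+)$ is abelian. So each $\lambda_a$ is an endomorphism of $(A,+)$.

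Next, the homomorphism property. For $a,b,c\in A$, associativity of $\circ$ gives
\[
\lambda_{a\circ b}(c) = -(a\circ b) + a\circ(b\circ c).
\]
Write $b\circ c = b + \lambda_b(c)$ and apply \eqref{eq:leftBr} with the summands $b$ and $\lambda_b(c)$:
\[
a\circ(b+\lambda_b(c)) = a\circ b + a\circ \lambda_b(c) - a.
\]
Substituting, we get
\[
\lambda_{a\circ b}(c) = -a + a\circ\lambda_b(c) = \lambda_a(\lambda_b(c)).
\]

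Finally, bijectivity. Since the identity of $(A,\circ)$ is $0$, we have $\lambda_0(c) = -0 + 0\circ c = c$, so $\lambda_0=\mathrm{id}$. Combined with the homomorphism property,
\[
\lambda_a\lambda_{\bar a}=\lambda_{a\circ\bar a}=\lambda_0=\mathrm{id}=\lambda_{\bar a}\lambda_a,
\]
where $\bar a$ denotes the inverse of $a$ in $(A,\circ)$. Thus each $\lambda_a$ is an automorphism, and $\lambda\colon (A,\circ)\to\Aut(A,+)$ is a group homomorphism.

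No step is a real obstacle. The only point needing care is the homomorphism property, where one must rewrite $b\circ c$ as $b+\lambda_b(c)$ before applying \eqref{eq:leftBr}.
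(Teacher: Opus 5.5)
Your proof is correct and complete. Additivity follows from the compatibility condition, $\lambda_{a\circ b}=\lambda_a\lambda_b$ follows by rewriting $b\circ c=b+\lambda_b(c)$, and invertibility follows from $\lambda_0=\mathrm{id}$; this is the standard verification. The paper states this lemma without proof, as a well-known fact from the brace literature, so there is no separate argument to compare against.
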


\begin{lemma}
Let $(A,+,\circ)$ be a left brace. Then
\[
\Soc(A) = \ker \lambda.
\]
\end{lemma}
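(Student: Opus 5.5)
The statement $\Soc(A)=\ker\lambda$ follows by unwinding the definitions and checking both inclusions, using only the formula $\lambda_a(b)=-a+a\circ b$ from the preceding lemma. The key observation is that for fixed $a\in A$ the defining condition of the socle, $a+b=a\circ b$ for all $b\in A$, is equivalent to $-a+a\circ b=b$ for all $b\in A$. This holds because $(A,+)$ is an abelian group, so we may add $-a$ to both sides, and the step is reversible.

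For the inclusion $\Soc(A)\subseteq\ker\lambda$: take $a\in\Soc(A)$. For every $b\in A$ we get $\lambda_a(b)=-a+a\circ b=-a+(a+b)=b$, so $\lambda_a=\mathrm{id}_A$. The identity automorphism is the neutral element of $\Aut(A,+)$, hence $a\in\ker\lambda$.

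For the inclusion $\ker\lambda\subseteq\Soc(A)$: if $\lambda_a=\mathrm{id}_A$, then $-a+a\circ b=b$ for every $b$. Adding $a$ to both sides gives $a\circ b=a+b$ for all $b$, which is exactly the condition defining $a\in\Soc(A)$.

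There is no real obstacle here. The only point worth stating explicitly is that the kernel of $\lambda\colon (A,\circ)\to\Aut(A,+)$ means the preimage of the identity automorphism, which is well defined because the preceding lemma shows $\lambda$ is a group homomorphism. As a by-product, this shows $\Soc(A)$ is a normal subgroup of $(A,\circ)$, consistent with the definition calling it an ideal.
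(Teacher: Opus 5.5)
Your proof is correct: the paper states this lemma without proof as a standard fact, and your two-inclusion check via $\lambda_a(b) = -a + a\circ b$ is exactly the routine verification it leaves to the reader. Your remark that normality of $\Soc(A)$ in $(A,\circ)$ follows because $\lambda$ is a homomorphism is also sound.
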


By \cite{CV26}*{Theorem 8.3.1} we have the following result.
\begin{corollary}
If $(G,\circ)$ is a group and $A$ is a left $G$-module, then a bijective $1$-cocycle $\varphi \colon G \to A$ defines a left brace structure on $G$, with the addition given by
\[
g + h := \varphi^{-1}(\varphi(g) + \varphi(h)).
\]
In other words, the left brace structure is unique with the property that $\varphi \colon (G,+) \to A$ is an isomorphism of abelian groups.
\end{corollary}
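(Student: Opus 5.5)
We need to check two things: that the induced addition makes $(G,+,\circ)$ a left brace, and that this is the only left brace structure on $(G,\circ)$ for which $\varphi\colon(G,+)\to A$ is an additive isomorphism. We fix the convention that $\varphi$ is a $1$-cocycle for the left action of $G$ on $A$, i.e.
\[
\varphi(g\circ h)=\varphi(g)+g\cdot\varphi(h)\qquad\text{for all } g,h\in G.
\]
Here $g\cdot a$ denotes the module action, which is additive in $a$.

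First, $(G,+)$ is an abelian group. By construction the addition is transported from $A$ along the bijection $\varphi$, so associativity, commutativity, existence of a neutral element and of inverses all follow immediately, and $\varphi$ becomes an additive isomorphism. The cocycle identity with $g=h=1$ gives $\varphi(1)=0$, so the neutral element of $(G,+)$ is the identity $1$ of $(G,\circ)$.

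The main step is the brace identity $g\circ(h+k)+g=g\circ h+g\circ k$. Since $\varphi$ is injective and additive, it suffices to apply $\varphi$ to both sides and compare. The left side has image
\[
\varphi(g\circ(h+k))+\varphi(g)=\varphi(g)+g\cdot(\varphi(h)+\varphi(k))+\varphi(g).
\]
The right side has image
\[
\varphi(g\circ h)+\varphi(g\circ k)=\varphi(g)+g\cdot\varphi(h)+\varphi(g)+g\cdot\varphi(k).
\]
These agree because the action is additive and $A$ is abelian.

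Uniqueness is immediate: if $(G,+',\circ)$ is a brace with $\varphi\colon(G,+')\to A$ an isomorphism, then $\varphi(g+'h)=\varphi(g)+\varphi(h)$, so $g+'h=\varphi^{-1}(\varphi(g)+\varphi(h))=g+h$. Thus $+'$ coincides with $+$, and the brace structure is as claimed.
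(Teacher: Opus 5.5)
Your proof is correct and complete. You transport the addition along $\varphi$, note $\varphi(1)=0$, and check the brace identity after applying the injective additive map $\varphi$ (using the cocycle identity and additivity of the action); your uniqueness argument is also fine. The paper gives no argument of its own for this statement and simply cites \cite{CV26}*{Theorem 8.3.1}, so your direct verification stands in for that citation, and nothing is missing.
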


By \cite{CJO14}*{Theorem 1}, the structure group of the solution $(X,r)$ admits a unique brace structure, which, in a very natural way, agrees with the solution. By the above corollary, we describe this structure in terms of the bijective $1$-cocycle.

\begin{corollary}
The left brace structure on the group $G(X,r) = \langle (\sigma_x, t_x) : x \in X \rangle$, is given by the bijective cocycle $\varphi \colon G(X,r) \to A$, where $A$ is the free abelian group generated by $X$. The cocycle is defined by the formula
\[
\varphi(\sigma_x, t_x) = x
\]
for $x \in X$.
\end{corollary}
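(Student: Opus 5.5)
We need to show that the left brace structure on $G(X,r)$ coming from \cite{CJO14}*{Theorem 1} is the one induced by the map $\varphi$ with $\varphi(\sigma_x,t_x)=x$. The plan is to write down explicitly the map $\varphi\colon G(X,r)\to A=\Z^n$ that projects an affine map to its translation part, check that it is a bijective $1$-cocycle, and then invoke the preceding corollary (from \cite{CV26}*{Theorem 8.3.1}).

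First, $G(X,r)$ acts on $A$ through the linear part. By the monomorphism of \cite{ESS99,ALV20}, every $g\in G(X,r)$ is an affine map $(\sigma_g,t_g)$ with $\sigma_g$ a permutation matrix in the basis $X$ (Remark~\ref{remark:notation}). Let $g$ act on $A$ by $g\cdot a=\sigma_g(a)$, and set $\varphi(g)=t_g$. The composition rule
\[
(\sigma_g,t_g)(\sigma_h,t_h)=(\sigma_g\sigma_h,\ t_g+\sigma_g t_h)
\]
gives $\varphi(gh)=\varphi(g)+g\cdot\varphi(h)$, so $\varphi$ is a $1$-cocycle. On generators, $\varphi(\sigma_x,t_x)=t_x=x$, where $t_x$ is the translation by the basis vector $x$. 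This matches the formula in the statement, and since a cocycle is determined by its values on generators, this is the only cocycle with those values.

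The main step, and the expected obstacle, is bijectivity of $\varphi$. For this I would use the standard description of the brace structure on $G(X,r)$ from \cite{CJO14}*{Theorem 1} (equivalently \cite{ESS99}*{Proposition 2.3}): the set $\{(\sigma_a,a)\mid a\in\Z^n\}$ is a subgroup of $O(n,\Z)\ltimes\Z^n$, namely the image of the embedding, and $a\mapsto\sigma_a$ is the extension of $x\mapsto\sigma_x$ to $\Z^n$. That this set is closed under composition is exactly the cocycle identity $\sigma_{a+\sigma_a(b)}=\sigma_a\sigma_b$, which follows from the relations defining $G(X,r)$ together with involutivity and nondegeneracy. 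Granting this, the image of $G(X,r)$ is a subgroup containing each $(\sigma_x,x)$, hence it contains exactly one element with each translation part, so $\varphi$ is bijective. Injectivity also follows directly: if $t_g=t_h$, then both $g$ and $h$ equal $(\sigma_{t_g},t_g)$.

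By the preceding corollary, $\varphi$ then defines a left brace structure on $G(X,r)$ with $g+h=\varphi^{-1}(\varphi(g)+\varphi(h))$ and multiplication the group law. It remains to identify this brace with the one from \cite{CJO14}. Both have the same multiplicative group, and in both the additive group is $A$ transported along the translation part. Moreover $\lambda_g=\sigma_g$ in both, since $\varphi(g\circ h)-\varphi(g)=\sigma_g\varphi(h)$. The uniqueness clause in the corollary then shows that the two structures coincide.
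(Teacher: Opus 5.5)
Your proposal is correct and follows the paper's route, which is just \cite{CJO14}*{Theorem 1} combined with the preceding bijective-cocycle corollary; you additionally spell out that the translational part is a $1$-cocycle and that it is bijective by \cite{ESS99}*{Proposition 2.3}. The one soft spot is the final identification: the corollary's uniqueness clause presupposes that $\varphi$ is already additive for the brace of \cite{CJO14}, so close it either with the uniqueness in \cite{CJO14} itself (additive basis $X$ and $\lambda_x|_X=\sigma_x$), or by noting that the additive isomorphism $\psi$ of that brace onto $A$ with $\psi(x)=x$ is itself a $\sigma$-cocycle, hence equal to $\varphi$ by your remark that a cocycle is determined by its values on generators.
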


By \cite{ALV20}*{Theorem 2.2}, the multiplicative group of the quotient brace $G(X,r) / \Soc(G(X,r))$ is the holonomy group of the Bieberbach group $G(X,r)$, hence it is finite. In \cite{Ru24} Rump introduces the following definition.

\begin{definition}[\cite{Ru24}*{Definition 1}]
Let $(A,+,\circ)$ be a left brace. It is called
\begin{itemize}
    \item \emph{integral} of dimension $n$, if $(A,+)$ is a free abelian group of rank $n$;
    \item \emph{cofinite}, if $A/\Soc(A)$ is a finite brace.
\end{itemize}
A brace satisfying both conditions is called a \emph{cofinite integral brace} (CIB).
\end{definition}

By the remarks preceding the above definition, we have a first natural example.

\begin{example}\label{E:structure_group}
Every structure group of a finite, non-degenerate, involutive solution gives rise to a cofinite integral brace.
\end{example}

The Rump's definition may be thought of as an extension of the investigation of left brace structures on the structure groups of the solutions. However, we retain control over the multiplicative group of cofinite integral braces.

\begin{proposition}[\cite{Ru24}*{Proposition 4}]
The multiplicative group of an $n$-dimensional cofinite integral brace $A$ is a crystallographic group of dimension $n$. The socle of $A$ is the maximal abelian normal subgroup of $(A,\circ)$.
\end{proposition}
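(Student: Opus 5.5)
We show that $S=\Soc(A)$ is a lattice of translations and $(A,\circ)/S$ acts faithfully and finitely on it, which is the algebraic characterization of crystallographic groups. Write $A\cong\Z^n$ additively and put $S=\Soc(A)=\ker\lambda$. Since $S$ is an ideal, it is a normal subgroup of $(A,\circ)$ and an additive subgroup of $A$. On $S$ the two operations agree, since $a\circ b=a+b$ for $a\in S$. Hence $(S,\circ)=(S,+)$ is abelian, and by cofiniteness it has finite index in $A$, so $S\cong\Z^n$. Moreover $G=(A,\circ)$ acts on $S$ by conjugation, and we compute this action via $\lambda$. For $s\in S$ and $g\in A$ we have $g\circ s=g+\lambda_g(s)$. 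Also $\lambda_g(s)\in S$, since $S$ is an ideal and $\lambda$ preserves ideals ($\lambda_g(s)=-g+g\circ s$, and for ideals $g\circ s\circ g^{-1}\in S$). Then $g\circ s\circ g^{-1}=\lambda_g(s)\circ g\circ g^{-1}$, using $g\circ s = \lambda_g(s)\circ g$. To verify this identity: $\lambda_g(s)\in S$ gives $\lambda_g(s)\circ g=\lambda_g(s)+g=g\circ s$. So conjugation on $S$ is exactly $\lambda_g|_S$, a $\Z$-linear automorphism.

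Next we embed $G$ into $\Aff(\R^n)$. The map $g\mapsto(\lambda_g,g)$, with $g$ viewed in $(A,+)\cong\Z^n\subset\R^n$, is a homomorphism $G\to\GL(n,\Z)\ltimes\Z^n$. This holds because $g\circ h=g+\lambda_g(h)$, so the identity map $A\to\Z^n$ is a bijective $1$-cocycle for the linear action $\lambda$. It is injective, since the translation part determines $g$. The linear part $\lambda$ factors through $G/S$, which is finite. So $\lambda(G)$ is a finite subgroup of $\GL(n,\R)$ and can be conjugated into $\mathrm O(n)$ by averaging an inner product. After this conjugation, $G$ becomes a subgroup of $\Isom(\R^n)$ whose translation vectors form the lattice $\Z^n$ (conjugated). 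Discreteness follows because $G$ is a subset of the discrete set $\lambda(G)\times\Z^n$. Cocompactness follows because the translations in $G$ are exactly $S$, which is a full lattice, since $\lambda_s=\mathrm{id}$ for $s\in S$. Thus $G$ is crystallographic of dimension $n$, with translation subgroup $S$ and point group $\lambda(G)\cong G/S$.

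For the second claim, we invoke the first Bieberbach theorem, or directly the standard fact that in a crystallographic group the translation lattice is the unique maximal abelian normal subgroup. The main point to check is that the translation subgroup equals $S$ exactly. Since $\ker\lambda=S$, the elements with trivial linear part are precisely $S$. Then $S$ is the translation subgroup, and the standard fact identifies it as the maximal abelian normal subgroup. I expect the only delicate step to be this identification. Suppose $N\supseteq S$ is abelian and normal. Any $g\in N$ commutes with $S$, so $\lambda_g$ fixes the full-rank lattice $S$ pointwise. Hence $\lambda_g=\mathrm{id}$ and $g\in S$.
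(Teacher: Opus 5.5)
The paper does not prove this statement itself; it quotes Rump. Your argument is correct, and it is exactly the framework the paper sets up right afterwards. That framework has three parts: the embedding $a\mapsto(\lambda_a,a)$ into $\GL(n,\Z)\ltimes\Z^n$ of Lemma~\ref{lemma:natural-representation-of-brace}; the observation, as in the proof of Proposition~\ref{P:criterion}, that the elements with trivial linear part are precisely $\ker\lambda=\Soc(A)$; and the Bieberbach theorems, which identify this translation lattice with the maximal abelian normal subgroup.

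One caution about the step you flagged as delicate. Your direct check only shows that no abelian normal subgroup properly contains $S$, i.e.\ that $S$ is self-centralizing. The phrase ``the maximal abelian normal subgroup'' means more: \emph{every} abelian normal subgroup lies in $S$. For that you do need the Bieberbach fact you cited, or one more computation, as follows.

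Let $N$ be an abelian normal subgroup, take $g=(B,b)\in N$ with $B=\lambda_g$, and let $t=(1,s)$ with $s\in S$. Normality gives $g^{-1}(tgt^{-1})=(1,(B^{-1}-1)s)\in N$. Since $N$ is abelian, this translation commutes with $g$, so $B(B^{-1}-1)s=(B^{-1}-1)s$. This is equivalent to $(B-1)^2s=0$, and it holds for all $s$ in the full-rank lattice $S$. Because $B$ has finite order, it is diagonalizable over $\mathbb{C}$, so $(B-1)^2=0$ forces $B=1$. Hence $g\in\ker\lambda=S$.
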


By the Bieberbach theorems, every $n$-dimensional crystallographic group $\Gamma$ fits into the following short exact sequence
\begin{equation}
\label{equation:ses:crystallographic}
0 \longrightarrow L \longrightarrow \Gamma \longrightarrow G \longrightarrow 1,
\end{equation}
where the free abelian group 
$L$ of rank $n$ is the maximal abelian normal subgroup of $\Gamma$ and $G$ is a finite group. 

In the investigation of cofinite integral braces on low dimensional crystallographic groups, Rump comes to the following conclusions.

\begin{theorem}[\cite{Ru24}*{Theorem 3}]\label{T:wallpaper_CIB}
Let $\Gamma$ be a $2$-dimensional crystallographic group that fits into the sequence \eqref{equation:ses:crystallographic}. Then $\Gamma$ is a multiplicative group of a cofinite integral brace \emph{exactly} in one of the following cases:
\begin{enumerate}
    \item $\Gamma$ is \textbf{p4g}.
    \item $L$ has a non-trivial cyclic subgroup, which is normal in $\Gamma$.
    \item $G$ is cyclic of order $3$ or $4$. 
\end{enumerate}
\end{theorem}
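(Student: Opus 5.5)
This statement is Rump's classification (\cite{Ru24}*{Theorem 3}), cited here without proof, so the plan reconstructs it from the tools in this section. The approach is to translate "CIB on $\Gamma$" into bijective $1$-cocycles and then run through the $17$ wallpaper groups.

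\textbf{Step 1: reduce to cocycles.} By the corollary to \cite{CV26}*{Theorem 8.3.1}, a brace structure with multiplicative group $\Gamma$ is the same as a bijective $1$-cocycle $\varphi\colon\Gamma\to\Z^2$, where the action on $\Z^2$ comes from $\lambda$. Cofiniteness forces $\ker\lambda=\Soc(A)$ to have finite index. By \cite{Ru24}*{Proposition 4}, $\Soc(A)$ is the maximal abelian normal subgroup, so $\Soc(A)=L$. Hence $\lambda$ factors through the holonomy $G\le \GL(2,\Z)$, and $\varphi$ restricted to $L$ is an injective $G$-equivariant additive map $L\to\Z^2$ with finite cokernel. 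Conversely, $\varphi$ descends to a bijective cocycle $\bar\varphi\colon G\to \Z^2/\varphi(L)$, and the image of $\Gamma$ is all of $\Z^2$. So a CIB exists if and only if there is a finite $G$-quotient $M=\Z^2/\varphi(L)$, with $|M|=|G|$, carrying a bijective cocycle $G\to M$ compatible with the extension class of $\Gamma$. The compatibility condition is that the image of $[\Gamma]\in H^2(G,L)$ vanishes after pushing to $\Z^2$ along $L\to\Z^2$.

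\textbf{Step 2: sufficiency.} I would exhibit such a $\varphi$ in each of the three cases.
\begin{itemize}
\item In case (3), $G=C_3$ or $C_4$ acts on $\Z^2$ as a rotation. I take $M=\Z^2/(1-g)\Z^2$, which has order $3$ or $2$. This is too small when $|G|=4$, so I would instead use an index-$|G|$ sublattice such as $(1-g)^{k}\Z^2$ and the cocycle $g^i\mapsto 1+g+\dots+g^{i-1}$. This covers \textbf{p3} and \textbf{p4}.
\item In case (2), a $\Gamma$-normal cyclic subgroup $\langle v\rangle\le L$ gives an invariant line. This lets me build the cocycle "coordinate by coordinate", using a suitable index-$|G|$ sublattice along $v$, for the groups with $G$ of order $\le 4$ preserving a line (the \textbf{p1}, \textbf{p2}, \textbf{pm}, \textbf{pg}, \textbf{cm}, \textbf{pmm}, \textbf{pmg}, \textbf{pgg}, \textbf{cmm}-type cases as appropriate).
\item In case (1), \textbf{p4g} is handled by an explicit cocycle.
\end{itemize}

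\textbf{Step 3: necessity, the main obstacle.} This amounts to ruling out \textbf{p6}, \textbf{p3m1}, \textbf{p31m}, \textbf{p6m}, \textbf{p4m}, and the remaining cases with no invariant cyclic subgroup. The key constraints are:
\begin{itemize}
\item $\lambda$ gives a bijection $G\to M$, so $M$ is a $G$-module of order $|G|$.
\item Bijective cocycles force $M$ to be a regular "affine" $G$-set, which makes $G$ act on $M$ with the cocycle as an orbit map, so $M^{G}$ and the fixed points are heavily restricted.
\end{itemize}
For $G=C_6$ or $D_6$, the quotients of $\Z^2$ by index-$6$ or index-$12$ invariant sublattices are restricted to cyclic-like modules on which $C_6$ cannot act with a bijective cocycle. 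Using $(1-g)$ being invertible modulo primes $\ne2,3$, I would count orbits to reach a contradiction. For the dihedral cases, a reflection $s$ would force $\varphi(s)$ to be an eigenvector, and bijectivity together with the extension class would force an invariant cyclic subgroup of $L$. The delicate part is the interplay with the extension class, which is what separates \textbf{p4g} from \textbf{p4m}. I expect this to require a direct finite computation over sublattices of index $8$ and their quotients.
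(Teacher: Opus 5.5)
The paper does not prove this statement: it is quoted from \cite{Ru24}, and the paper's census in Table~\ref{table:dimension-two-details} is consistent with it. Judged on its own, your outline has two genuine gaps.

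The first gap is in Step~1, and the criterion there is wrong, not just imprecise. The condition $\iota_*[\Gamma]=0$ in $H^2(G,\Z^2)$ only says that \emph{some} cocycle on $\Gamma$ restricts to $\iota$ on $L$. What is needed is that the bijective cocycle $\bar\varphi\colon G\to M$ itself satisfies $\delta[\bar\varphi]=[\Gamma]$, where $\delta\colon H^1(G,M)\to H^2(G,L)$ is the connecting map of $0\to L\to\Z^2\to M\to 0$. The paper builds this in by fixing $L=\Z^n$ and changing the vector system only by coboundaries $(g-1)x$, so that $\bar s$ stays in the class $\gamma$ (Corollary~\ref{corollary:cib-on-holonomy-group}, Theorem~\ref{theorem:classification}). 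To see that your version fails, note that \textbf{p4m} and \textbf{p4g} have the same holonomy lattice, hence the same possible $\iota$ and $M$. Since $[\Gamma_{\mathbf{p4m}}]=0$, your condition holds automatically, so the bijective cocycle that exists for \textbf{p4g} would yield a CIB on \textbf{p4m}. As written, your framework cannot separate precisely the pair you call delicate.

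The second gap is that Step~3, where the content of the theorem lies, is not carried out. The mechanism you suggest for dihedral holonomy also proves too much. \textbf{p4g} contains genuine reflections, admits a CIB, and $D_4$ acts irreducibly on $L$, so ``a reflection forces an eigenvector, hence an invariant cyclic subgroup'' cannot be a valid step. The missing arguments are concrete.

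\begin{enumerate}
\item If $C_3\le G$, then $\Z^2\cong\Z[\omega]$ as a $C_3$-module, so $\iota(L)$ is an ideal of norm $|G|$. Since $2$ is inert, no ideal of norm $6$ exists. This excludes \textbf{p3m1}, \textbf{p31m} and \textbf{p6} at once, with no orbit counting.
\item For \textbf{p6m}, the only ideal of norm $12$ is $2(1-\omega)$, so $M\cong\mathbb{F}_4\oplus\mathbb{F}_3$, which is not cyclic. Write $(u,w)$ for the value on a rotation of order $6$. Injectivity on the rotations forces their values to be $\{0,u,\omega u\}\times\{0,w\}$ with $u,w\neq0$. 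The three reflections acting trivially on $\mathbb{F}_3$ need values with $\mathbb{F}_3$-coordinate $0$, and only $(\omega^2u,0)$ is left.
\item For \textbf{p4m}, the only $C_4$-invariant sublattice of index $8$ in $\Z[i]$ is $(1+i)^3\Z[i]$. Since $\Gamma$ splits, $\bar\varphi$ lifts to a $\Z[i]$-valued cocycle $c$ of $D_4=\langle r,s\rangle$, with $r$ acting by $i$ and $s$ by conjugation. Then $c(r)=a+bi$ forces $c(s)=(a+b)i$ and the other reflection values $b(i-1)$, $a-b$, $a(1+i)$. A valuation count at $1+i$ shows that two of the eight values always coincide modulo $(1+i)^3$.
\end{enumerate}

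Sufficiency is also only gestured at. No cocycle is written down for \textbf{p4g}, nor for the non-symmorphic \textbf{pg}, \textbf{pmg}, \textbf{pgg}, where it must lie in a non-zero class. ``Coordinate by coordinate'' does not directly apply to \textbf{cm} and \textbf{cmm}, whose lattice is not the sum of its invariant lines; for \textbf{cmm} the only CIB has cyclic quotient $C_4$.
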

\subsection{Two-sided cofinite integral braces}
\begin{theorem}\label{T:two-sided}
A cofinite integral left brace admits a compatible right brace structure if and only if it is a trivial brace.
\end{theorem}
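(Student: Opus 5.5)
The plan is to use the classical correspondence between two-sided braces and radical rings (Rump \cite{Ru07}). The main obstacle is quoting this correspondence correctly; the remaining steps are short.

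The easy direction comes first. If $A$ is a trivial brace, i.e. $\circ = +$, then the right brace condition \eqref{eq:rightBr} reads $(b+c)+a+a = (b+a)+(c+a)$. This holds because $(A,+)$ is abelian, so $A$ is two-sided.

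For the converse, let $A$ be a cofinite integral left brace that is also a right brace. Define
\[
a * b := a\circ b - a - b .
\]
The left brace axiom \eqref{eq:leftBr} is equivalent to left distributivity $a*(b+c)=a*b+a*c$, and \eqref{eq:rightBr} is equivalent to right distributivity $(b+c)*a = b*a+c*a$. By Rump's result, for a two-sided brace the operation $*$ is moreover associative, so $(A,+,*)$ is a (Jacobson radical) ring.

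The only facts actually needed are bilinearity of $*$ (in particular $\Z$-bilinearity) and the description of the socle in these terms. Bilinearity follows directly from the two distributive laws above, so associativity is not really used. By definition,
\[
a\in \Soc(A) \iff a\circ b = a+b \ \text{for all } b \iff a*b=0 \ \text{for all } b,
\]
so $\Soc(A)$ is exactly the left annihilator of $(A,*)$.

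Now use cofiniteness. Since $A/\Soc(A)$ is finite, its additive group has some finite exponent $m\ge 1$, so $ma\in\Soc(A)$ for every $a\in A$. Hence for all $a,b\in A$,
\[
m\,(a*b) = (ma)*b = 0 ,
\]
where the first equality uses right distributivity. Because $(A,+)$ is free abelian, it is torsion-free, so $a*b=0$. Therefore $a\circ b=a+b$ for all $a,b$, which means $A$ is trivial and $A=\Soc(A)$.
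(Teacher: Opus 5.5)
Your proof is correct and is essentially the paper's argument in different notation. Since $\lambda_a = \mathrm{id} + a*(-)$, your right distributivity $(a+b)*c = a*c + b*c$ is exactly the paper's identity $\lambda_{a+b} + \mathrm{id} = \lambda_a + \lambda_b$, and both proofs conclude by putting a multiple of $a$ into $\Soc(A)$ and using that $(A,+)$ is torsion-free; the radical-ring correspondence is not needed (as you note, associativity is never used), and your check of the trivial direction just makes explicit what the paper leaves unstated.
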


\begin{proof}
Let $(A,+,\circ)$ be a cofinite integral left brace. The usual action $\lambda \colon (A,\circ) \to \Aut(A,+)$ can be used to define the $\circ$ action:
\[
\forall_{a,b \in A} a \circ b = a + \lambda_a(b).
\]
Now, the condition for a right brace structure, i.e.,
\[
(a+b) \circ c = a \circ c - c + b \circ c,
\]
translates to
\[
(\lambda_{a+b} + id)(c) = (\lambda_a + \lambda_b)(c)
\]
for all $a,b,c \in A$. This means that for every $a,b \in A$, we have the equality of maps
\[
\lambda_{a+b} + id = \lambda_a + \lambda_b
\]
and a simple induction argument shows that
\begin{equation}
\label{equation:multiplicity-of-a}
\lambda_{na} + (n-1)id = n \lambda_a
\end{equation}
for every $n \in \N$. Take $a \in A$. Since $A$ is cofinite, the quotient brace $A/\Soc(A)$ is finite, so $(A/\Soc(A),+)$ is a finite abelian group. Hence, there exists $n \in \N$ such that $na \in \Soc(A) = \ker \lambda$. Then $\lambda_{na}=id$, and equation \eqref{equation:multiplicity-of-a} gives us
\[
n \cdot id = n \cdot \lambda_a.
\]
Note that $\lambda_a$ is an automorphism of a free abelian group of finite rank, hence $\lambda_a = id$. Since $a \in A$ was arbitrary, we get $A = \Soc(A)$ and the proof is finished.
\end{proof}
\section{Isomorphism classes of cofinite integral braces}\label{S:isomorphism}

Recall that the holomorph $\Hol(G)$ of a group $G$ is the semidirect product $\Aut(G) \ltimes G$. It has been proved useful in the determination of braces with a given additive group, see for example \cite{Ba16}*{Proposition 2.3}. In particular, we have the following lemma.
\begin{lemma}
\label{lemma:natural-representation-of-brace}
Let $(A,+,\circ)$ be a left brace. Let $B(A) = \{ (\lambda_a,a) : a \in A \} \subset \Hol(A,+)$. The map
\[
\Lambda \colon A \to B(A),
\]
given by $a \mapsto (\lambda_a, a)$, is a brace isomorphism, where the brace structure on $B(A)$ is given by the bijective $1$-cocycle
\[
\varphi \colon B(A) \to (A,+), (\lambda_a,a) \mapsto a.
\]
\end{lemma}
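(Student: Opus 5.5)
The plan is to verify three facts about $B(A)$ and the maps involved, and then assemble them. First, $B(A)$ is a subgroup of $\Hol(A,+) = \Aut(A,+)\ltimes(A,+)$. Second, $\varphi$ is a bijective $1$-cocycle, so the corollary quoted from \cite{CV26}*{Theorem 8.3.1} puts a left brace structure on $B(A)$. Third, $\Lambda$ respects both operations. Throughout, the multiplication in $\Hol(A,+)$ is $(f,a)(g,b)=(fg,\,a+f(b))$. The group $B(A)$ acts on $(A,+)$ through the projection $(\lambda_a,a)\mapsto\lambda_a$, which makes $(A,+)$ a left $B(A)$-module.

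The key computation is
\[
(\lambda_a,a)(\lambda_b,b) = (\lambda_a\lambda_b,\, a+\lambda_a(b)) = (\lambda_{a\circ b},\, a\circ b).
\]
The first coordinate uses the earlier lemma that $\lambda$ is a homomorphism. The second uses the identity $a+\lambda_a(b)=a\circ b$, which is immediate from $\lambda_a(b)=-a+a\circ b$.

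This computation gives the following facts.
\begin{itemize}
\item $\Lambda(a\circ b)=\Lambda(a)\Lambda(b)$, so $\Lambda$ is a homomorphism from $(A,\circ)$ into $\Hol(A,+)$.
\item The image of $\Lambda$ is exactly $B(A)$, so $B(A)$ is a subgroup.
\item $\Lambda$ is injective, because the second coordinate of $\Lambda(a)$ recovers $a$. Hence $\Lambda\colon(A,\circ)\to B(A)$ is a group isomorphism.
\item $\varphi$ is a bijection: it is the inverse of $\Lambda$ on the level of sets.
\item $\varphi$ is a $1$-cocycle:
\[
\varphi\bigl((\lambda_a,a)(\lambda_b,b)\bigr)=a+\lambda_a(b)=\varphi(\lambda_a,a)+(\lambda_a,a)\cdot\varphi(\lambda_b,b).
\]
\end{itemize}

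By the cited corollary, $B(A)$ then carries a left brace structure with addition
\[
(\lambda_a,a)+(\lambda_b,b):=\varphi^{-1}(a+b)=(\lambda_{a+b},\,a+b).
\]
It follows that $\Lambda(a+b)=\Lambda(a)+\Lambda(b)$. Together with the multiplicativity shown above, $\Lambda$ is a bijective brace morphism, hence a brace isomorphism.

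No step is a genuine obstacle; everything is an unwinding of definitions. The one point that needs care is fixing the multiplication convention in the holomorph. It must be chosen so that the second coordinate composes as $a+\lambda_a(b)$ rather than $b+\lambda_b(a)$; only with this choice does the cocycle identity match the left module action, and does $\lambda$ being a homomorphism (rather than an anti-homomorphism) give closure of $B(A)$.
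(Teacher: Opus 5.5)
Your proof is correct and complete. The identity $(\lambda_a,a)(\lambda_b,b)=(\lambda_{a\circ b},a\circ b)$ gives closure of $B(A)$ and multiplicativity of $\Lambda$ at once. You check the cocycle identity against the module structure induced by the projection $B(A)\to\Aut(A,+)$. Additivity of $\Lambda$ is then tautological, because the addition on $B(A)$ is transported through $\varphi=\Lambda^{-1}$.

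The paper gives no separate proof of this lemma; it presents it as a standard fact about the holomorph, cf.\ \cite{Ba16}*{Proposition 2.3}. Your convention $(f,a)(g,b)=(fg,a+f(b))$ is the one the paper uses for affine maps, $(X,x)(Y,y)=(XY,Xy+x)$, in the proof of Proposition~\ref{P:criterion}.
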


\begin{remark}
\label{remark:bijective-cocycle}
By $\varphi \colon \Aff({\R}^n) = \GL(n,{\R}) \ltimes {\R}^n \to {\R}^n$ we will denote the projection to the second coordinate -- the translational part -- of the affine map:
\[
\varphi(X,x) := x.
\]
It is easy to see that $\varphi$ is a $1$-cocycle. In order to retain the clarity of notation we will also write $\varphi$ to denote its restriction to subgroups of $\Aff({\R}^n)$.
\end{remark}

\begin{corollary}
\label{corollary:representation-of-cib}
Let $(A,+,\circ)$ be a cofinite integral brace of dimension $n$. There exists a faithful representation
\[
\Lambda \colon (A,\circ) \to \GL(n,\Z) \ltimes \Q^n
\]
of the multiplicative group of $A$, such that:
\begin{enumerate}
    \item $\Lambda(A)$ is a cofinite integral brace with the corresponding 1-cocycle $\varphi$.
    \item $\Lambda(\Soc(A)) = \Z^n$.
    \item $\Lambda(A) \subset G \ltimes L$, where $\lambda(A) \cong G \subset \GL(n,\Z)$, $(A,+) \cong L \subset \Q^n$.
    \item $A/\Soc(A) \hookrightarrow G \ltimes L/\Z^n$.
    \item $L/\Z^n \cong (A/\Soc(A),+)$.
    \item $[L : \Z^n ] = |\lambda(A)| = |G|$.
\end{enumerate}
\end{corollary}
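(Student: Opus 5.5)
We start from Lemma~\ref{lemma:natural-representation-of-brace}, which embeds $A$ into $\Hol(A,+) = \Aut(A,+)\ltimes (A,+)$ via $a\mapsto(\lambda_a,a)$. This map is a brace isomorphism onto $B(A)$, where the brace structure on $B(A)$ comes from the translational cocycle $\varphi$. The task is to choose a coordinate identification of $(A,+)$ in which $\Soc(A)$ becomes the standard lattice $\Z^n$.

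\textbf{Choosing coordinates.} Since $A/\Soc(A)$ is finite, $\Soc(A)$ is a subgroup of finite index $m$ in $(A,+)\cong\Z^n$. Hence $\Soc(A)$ is free of rank $n$, and $mA\subseteq\Soc(A)$. Fix a $\Z$-basis of $\Soc(A)$ and extend scalars: $A\otimes\Q=\Soc(A)\otimes\Q\cong\Q^n$. In this identification $\Soc(A)=\Z^n$, and $A$ is a lattice $L$ with $\Z^n\subseteq L\subseteq \tfrac1m\Z^n\subset\Q^n$.

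\textbf{Integrality of the linear part.} Each $\lambda_a$ is an automorphism of $(A,+)$. Since $\Soc(A)$ is an ideal, $\lambda_a$ also preserves $\Soc(A)$. As $\lambda_a$ is a bijection of $A$ mapping $\Soc(A)$ into itself, and indices are finite and preserved, it maps $\Soc(A)$ onto itself. Therefore, in the chosen basis, $\lambda_a$ is given by a matrix in $\GL(n,\Z)$. Define
\[
\Lambda(a)=(\lambda_a,a)\in\GL(n,\Z)\ltimes\Q^n.
\]
This is an injective homomorphism of $(A,\circ)$, because it is the restriction of the holomorph embedding, and the group law of $\Hol$ coincides with affine composition. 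Items 1 and 3 then follow from Lemma~\ref{lemma:natural-representation-of-brace}, with $G=\lambda(A)$ and $L=\varphi(\Lambda(A))$. Item 2 holds because for $a\in\Soc(A)=\ker\lambda$ we get $\Lambda(a)=(1,a)$, and these $a$ run over exactly $\Z^n$.

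\textbf{Quotients and the index count.} For item 5, the map $\varphi$ induces an additive isomorphism $A/\Soc(A)\cong L/\Z^n$. For item 4, send the class of $a$ to $(\lambda_a, a+\Z^n)$ in $G\ltimes L/\Z^n$. This is well defined, since $\lambda$ is constant on cosets of $\ker\lambda$. It is injective, since the second coordinate already is. It is multiplicative, since $G$ preserves $\Z^n$ and the semidirect product passes to the quotient. For item 6, $\lambda$ induces a group isomorphism $(A,\circ)/\Soc(A)\cong\lambda(A)=G$. Moreover, $|(A/\Soc(A),\circ)|=|(A/\Soc(A),+)|$, as both are the same underlying set. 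Hence $[L:\Z^n]=|G|$.

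The main obstacle I expect is the bookkeeping in item 4: one has to check carefully that the brace quotient $A/\Soc(A)$ really embeds as a subgroup of $G\ltimes L/\Z^n$. Everything else is routine once the coordinates are fixed so that $\Soc(A)=\Z^n$.
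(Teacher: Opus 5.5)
Your proof is correct and follows the paper's approach. The paper states this corollary without a separate proof, as a direct consequence of Lemma~\ref{lemma:natural-representation-of-brace}, with $(A,+)$ identified with a lattice in $\Q^n$ so that $\Soc(A)=\Z^n$; this is the normalization later invoked in the proof of Proposition~\ref{proposition:fixed-multiplicative-group}. You supply the details the paper leaves implicit: that $\lambda_a$ preserves $\Soc(A)$ and is therefore integral in a basis of the socle, and that the additive and multiplicative cosets of $\Soc(A)$ coincide, which is what makes items 4--6 work.
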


\begin{proposition}\label{P:criterion}
A pair $(\Gamma, \varphi)$, where $\Gamma \subset \Aff(\R^n)$ is a crystallographic group of dimension $n$, defines a cofinite integral brace if and only if $\varphi$ is one-to-one and the image $\varphi(\Gamma)$ is a group.
\end{proposition}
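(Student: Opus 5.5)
We prove the two implications separately. Throughout, $\varphi(X,x)=x$ is the translational-part cocycle of Remark~\ref{remark:bijective-cocycle}. It is a $1$-cocycle for the action of $\Gamma$ on $\R^n$ through the linear part, because $(X,x)(Y,y)=(XY, x+Xy)$.

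For ``if'', assume $\varphi$ is injective on $\Gamma$ and $M:=\varphi(\Gamma)$ is a subgroup of $\R^n$. We first check that $M$ is $\Gamma$-invariant. From the cocycle identity, $X\varphi(\gamma')=\varphi(\gamma\gamma')-\varphi(\gamma)\in M$ for $\gamma=(X,x)$. Hence $M$ is a left $\Gamma$-module and $\varphi\colon\Gamma\to M$ is a bijective $1$-cocycle. By the corollary derived from \cite{CV26}*{Theorem 8.3.1}, this gives a left brace on $\Gamma$ whose additive group is isomorphic to $M$.

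Next we show $M$ is free abelian of rank $n$. By the Bieberbach theorems, the translation subgroup $L=\Gamma\cap\R^n$ is a lattice of rank $n$ and has finite index $|G|$ in $\Gamma$. Its image $\varphi(L)=L$ therefore has index $|G|$ in $M$, since $\varphi$ is a bijection and the cosets match. So $M$ is finitely generated, torsion-free, of rank $n$, hence free abelian of rank $n$.

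For cofiniteness, we identify the map $\lambda$ of this brace. One checks $\lambda_\gamma(m)=Xm$, the linear part acting on $M$. Hence the socle contains $L$, whose linear parts are trivial. Since $[\Gamma:L]<\infty$, the quotient $A/\Soc(A)$ is finite.

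For ``only if'', suppose $(\Gamma,\varphi)$ defines a CIB. By definition, $\varphi$ is then a bijective $1$-cocycle onto an abelian group carrying the addition. So $\varphi$ is injective, and its image $\varphi(\Gamma)$ is the additive group, in particular a subgroup of $\R^n$.

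Where ``defines'' is read more loosely, we would invoke Corollary~\ref{corollary:representation-of-cib} and Lemma~\ref{lemma:natural-representation-of-brace}. These say any CIB is realised by such an affine pair with $\varphi$ bijective onto $L\subset\Q^n$, a group.

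The main point needing care is the computation $\lambda_\gamma=X$, which is needed to identify the socle. From $g+h=\varphi^{-1}(\varphi(g)+\varphi(h))$ we get
\[
\varphi(\lambda_\gamma(\delta))=-\varphi(\gamma)+\varphi(\gamma\delta)=X\varphi(\delta).
\]
Thus $\lambda_\gamma$ corresponds to $X$ under $\varphi$.

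Also, discreteness is not used beyond the finite-index statement coming from the Bieberbach theorems.
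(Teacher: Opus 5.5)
Your proposal is correct and follows essentially the paper's own route: the ``only if'' direction holds by definition, and for ``if'' you use the computation $\varphi(\lambda_\gamma(\delta)) = X\varphi(\delta)$ to place the translation lattice $L$ inside the socle, then the Bieberbach finite-index statement for $L$ to get cofiniteness and, since $[\varphi(\Gamma):L]<\infty$, integrality of rank $n$. You also make explicit two points the paper leaves implicit: that $\varphi(\Gamma)$ is $\Gamma$-invariant, which is needed to invoke the cocycle-to-brace corollary, and the coset count giving $[\varphi(\Gamma):L]=|G|$.
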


\begin{proof}
If $(\Gamma, \varphi)$ defines a cofinite integral brace, then the properties of $\varphi$ are fulfilled by definition. Now, if $\varphi$ is one-to-one and the image $\varphi(\Gamma)$ is a group, then the pair $(\Gamma, \varphi)$ defines a left brace. Hence, we have the action map $\lambda \colon (\Gamma, \circ) \to \Aut(\Gamma, +)$. For $a=(X,x), b=(Y,y) \in \Gamma$ we have
\[
\varphi(\lambda_a(b)) = -\varphi(a) + \varphi(a \circ b) = -\varphi(X,x) + \varphi(XY,Xy+x) = -x + Xy + x = Xy,
\]
hence
\[
\varphi(\Soc(\Gamma)) = \{ x \in {\R}^n : (I,x) \in \Gamma \}.
\]
We get that $(\Soc(\Gamma),\circ)$ is the maximal abelian normal subgroup of $\Gamma$ and by the Bieberbach theorems, $\Gamma$ is a cofinite brace. Moreover, $\varphi(\Soc(\Gamma))$ is free abelian group that is a subgroup of finite index in $\varphi(\Gamma)$, hence $\Gamma$ is integral of dimension $n$.
\end{proof}

\begin{remark}
For every $n \in \N$ we set
\[
\Aff_\Z(\Q^n) := \GL(n,\Z) \ltimes \Q^n.
\]
Note that by Bieberbach theorems \cite{Sz25}*{Theorem 2.4} not only a multiplicative group of CIB, but every crystallographic group of dimension $n$ is isomorphic to a subgroup of $\Aff_\Z(\Q^n)$. An isomorphism of any two such crystallographic groups can be realized by a conjugation in the group $\Aff(\Q^n)$. In addition, if one restricts to the crystallographic subgroups of $\Aff_\Z(\Q^n)$ for which the maximal abelian normal subgroup equals exactly $\Z^n$, then the isomorphism classes and conjugacy classes in $\Aff_\Z(\Q^n)$ coincide.
\end{remark}

\begin{corollary}
\label{corollary:cib-on-holonomy-group}
Let $\Gamma \subset \Aff_\Z(\Q^n)$ be a crystallographic group of dimension $n$. Assume that the maximal abelian normal subgroup of $\Gamma$ equals $\Z^n$. Let $G=\Gamma/\Z^n$ and $\overline{\varphi} \colon G \to \Q^n/\Z^n$ be given by
\[
\overline{\varphi} (X,x+\Z^n) := x+\Z^n,
\]
for $(X,x) \in \Gamma$.
Then the pair $(\Gamma, \varphi)$ defines a cofinite integral brace if and only if the pair $(\Gamma/\Z^n, \overline{\varphi})$ defines a brace, i.e., $\overline{\varphi}$ is one-to-one and $\overline{\varphi}(G)$ is a group.
\end{corollary}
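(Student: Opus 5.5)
The plan is to reduce everything to Proposition~\ref{P:criterion}: $(\Gamma,\varphi)$ defines a CIB if and only if $\varphi$ is injective on $\Gamma$ and $\varphi(\Gamma)$ is a subgroup of $\Q^n$. So it suffices to show that these two conditions on $\Gamma$ are equivalent to the corresponding conditions on $\overline{\varphi}$ on $G=\Gamma/\Z^n$. First I check that $\overline{\varphi}$ is well defined. The fibre of $\Gamma\to G$ over $X$ is $(X,x)+\{(I,z):z\in\Z^n\}$ composed appropriately. Concretely, $(X,x)\circ(I,z)=(X,x+Xz)$ and $X\in\GL(n,\Z)$, so the translational parts in a fibre form exactly the coset $x+\Z^n$. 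Here I use the hypothesis that the translation subgroup $\Gamma\cap(\{I\}\ltimes\Q^n)$, which is the maximal abelian normal subgroup, equals $\{I\}\ltimes\Z^n$. The same computation shows $\overline{\varphi}$ is a $1$-cocycle for the induced action of $G$ on $\Q^n/\Z^n$.

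For injectivity, suppose first $\varphi$ is injective. If $\overline{\varphi}(X)=\overline{\varphi}(Y)$, pick lifts $(X,x),(Y,y)$ with $y-x=z\in\Z^n$. Then $(X,x)\circ(I,X^{-1}z)=(X,y)$ lies in $\Gamma$, and since $\varphi$ is injective, $(X,y)=(Y,y)$, so $X=Y$. Conversely, suppose $\overline{\varphi}$ is injective and $(X,x),(Y,x)\in\Gamma$. Then $\overline{\varphi}(X)=\overline{\varphi}(Y)$ gives $X=Y$, so the two elements coincide.

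For the image, note that $\varphi(\Gamma)$ is the full preimage of $\overline{\varphi}(G)$ under $\Q^n\to\Q^n/\Z^n$. The inclusion $\varphi(\Gamma)\subseteq\pi^{-1}(\overline{\varphi}(G))$ is clear. For the reverse, the fibre computation above shows that every element of $x+\Z^n$ is a translational part of some element of $\Gamma$ over $X$. A full preimage of a subset under a surjective homomorphism is a subgroup if and only if the subset is a subgroup, which gives the equivalence.

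Combining the two parts with Proposition~\ref{P:criterion} yields the corollary. The main point needing care is the fibre computation, namely that the translation lattice is exactly $\Z^n$ and is preserved by the linear parts. Everything else is formal. I would also note that when $\varphi$ is injective, $\varphi(\Gamma)$ is automatically a group once $\overline{\varphi}(G)$ is, so the brace structure on $\Gamma$ is defined as in the corollary preceding Example~\ref{E:structure_group}.
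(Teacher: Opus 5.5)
Your proof is correct and follows essentially the same route as the paper: reduce to Proposition~\ref{P:criterion}, show that injectivity of $\varphi$ and of $\overline{\varphi}$ are equivalent, and show that $\varphi(\Gamma)$ is a group if and only if $\overline{\varphi}(G)$ is, using lifts that differ by lattice translations. Your formulation $\varphi(\Gamma)=\pi^{-1}(\overline{\varphi}(G))$, together with your check that $\overline{\varphi}$ is well defined, makes explicit the fibre fact (the elements of $\Gamma$ over a fixed linear part have translational parts forming a full coset of $\Z^n$) that the paper uses without comment.
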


\begin{proof}
We will show that properties of $\varphi$ and $\overline{\varphi}$ being one-to-one as well as the images of those maps being groups are equivalent.

Assume that $\overline{\varphi}$ is one-to-one. Let $(X,x),(Y,y) \in \Gamma$. We get
\[
\varphi(X,x) = \varphi(Y,y) \Rightarrow x = y \Rightarrow x+\Z^n = y+\Z^n \Rightarrow \overline{\varphi}(X,x+\Z^n) = \overline{\varphi}(Y,y+\Z^n).
\]
By assumption, we get $X=Y$ and $(X,x) = (Y,y)$. If $\varphi$ is one-to-one and $(X,x),(Y,y) \in \Gamma$ are such that
\[
\overline\varphi(X,x) = \overline\varphi(Y,y),
\]
then $x+\Z^n = y+\Z^n$, which obviously means that there exists $z \in \Z^n$ such that $x = y+z$. But then $\varphi(X,x) = \varphi(Y,y+z)$, hence $X=Y$ and $(X,x+\Z^n) = (Y,y+\Z^n)$.

If $\varphi(\Gamma)$ is a group, then $\overline\varphi(G) = \overline\varphi(\Gamma/\Z^n) = \varphi(\Gamma)/\Z^n$ is also a group. Suppose that $\overline\varphi(G)$ is a group and take $x,y \in \varphi(\Gamma)$. By the assumption we get that
\[
(x+\Z^n) - (y+\Z^n) = (x-y) + \Z^n \in \overline\varphi(G).
\]
Hence there exists $Z \in \GL(n,\Z)$ such that $(Z, x-y + \Z^n) \in \Gamma/\Z^n$, what exactly means that $(Z,x-y) \in \Gamma$ and finally $x-y \in \varphi(\Gamma)$.
\end{proof}

\begin{corollary}
\label{corollary:translational-part-image}
If a pair $(\Gamma,\varphi)$, as in the above corollary, defines a cofinite integral brace, then
\[
\varphi(\Gamma) \subset \frac{1}{|G|} \Z^n.
\]
\end{corollary}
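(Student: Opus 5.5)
By Corollary~\ref{corollary:cib-on-holonomy-group}, the pair $(G,\overline{\varphi})$ defines a finite brace. In particular $\overline{\varphi}$ is a bijection from $G$ onto the subgroup $\overline{\varphi}(G) = \varphi(\Gamma)/\Z^n$ of $\Q^n/\Z^n$. This subgroup is therefore a finite abelian group of order exactly $|G|$. By Lagrange's theorem, every element of a finite group of order $|G|$ is annihilated by $|G|$. So for every $x \in \varphi(\Gamma)$ we get $|G|\,(x+\Z^n) = 0$ in $\Q^n/\Z^n$, that is, $|G|x \in \Z^n$.

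This gives $x \in \frac{1}{|G|}\Z^n$ for all $x\in\varphi(\Gamma)$, which is the claim. Equivalently, one can argue via Corollary~\ref{corollary:representation-of-cib}: there $L=\varphi(\Gamma)$ contains $\Z^n$ with $[L:\Z^n]=|G|$, hence $|G|L\subset \Z^n$.

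The only point needing care is that $\varphi(\Gamma)$ really is a group containing $\Z^n$ with index $|G|$. This follows from the hypothesis that $\Z^n$ is the maximal abelian normal subgroup of $\Gamma$: the translations $(I,z)$ with $z \in \Z^n$ lie in $\Gamma$, so $\Z^n \subset \varphi(\Gamma)$. The index count then comes from the bijectivity of $\overline{\varphi}$ established in Corollary~\ref{corollary:cib-on-holonomy-group}. With that in place there is no real obstacle.
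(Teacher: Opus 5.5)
Your proof is correct and follows essentially the same route as the paper: Corollary~\ref{corollary:cib-on-holonomy-group} makes $\overline{\varphi}(G)=\varphi(\Gamma)/\Z^n$ a subgroup of $\Q^n/\Z^n$ of order $|G|$, so it is annihilated by $|G|$, which gives $\varphi(\Gamma)\subset\frac{1}{|G|}\Z^n$. You also spell out two steps the paper leaves implicit, namely why $\Z^n\subset\varphi(\Gamma)$ and why the element orders divide $|G|$ (Lagrange).
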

\begin{proof}
$(\Gamma,\varphi)$ is CIB if and only if $G = (\Gamma/\Z^n,\overline{\varphi})$ is a brace and 
$\overline{\varphi}(G)\subset \Q^n/\Z^n$ is an abelian group of order $|G|$. Hence 
\[
\overline{\varphi}(G) \subset \frac{\frac{1}{|G|} \Z^n}{\Z^n},
\]
and $\varphi(G)\subset \frac{1}{\mid G\mid}\Z^n$.
\end{proof}

\begin{definition}
Let $\Gamma$ be a Bieberbach group of dimension $n$. $\Gamma$ is called a {\em generalized
Hantzsche-Wendt} group ($GHW$ for short) if and only if $\Gamma$ is a torsion free crystallographic group with holonomy $C_2^{n-1}$, an elementary abelian $2$-group of rank $n-1$.
\end{definition}

\begin{example}\label{example:promislow}
Promislow group is $GHW$ group. For more see \cite{Sz25}*{Introduction to Chapter 10}.
\end{example}

\begin{lemma}\label{lemma:hantzsche-wendt-existence}
Every $GHW$ group admits a cofinite integral brace structure.
\end{lemma}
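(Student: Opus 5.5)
We prove the lemma by producing an explicit affine representation of $\Gamma$ for which the translational projection $\varphi$ is injective and has a group as image; then Proposition~\ref{P:criterion}, in the form of Corollary~\ref{corollary:cib-on-holonomy-group}, gives a cofinite integral brace.

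\textbf{Step 1: normal form.} We use the standard description of GHW groups from \cite{Sz25}. Any $n$-dimensional GHW group $\Gamma$ has lattice $\Z^n$ and holonomy $G\cong C_2^{n-1}$, acting diagonally, so $G\subset \diag(\pm1,\dots,\pm1)$. After a change of basis we may assume $\Gamma$ is generated by $\Z^n$ together with elements
\[
(X_i, x_i), \qquad X_i \in \diag(\pm 1,\ldots,\pm 1), \qquad x_i \in \tfrac12 \Z^n ,
\]
with translational parts taken modulo $\Z^n$ in $\{0,\tfrac12\}^n$. Moreover:
\begin{itemize}
\item the diagonal matrices $X_i$ are exactly the elements of $G$, and none of them equals $-I$;
\item torsion-freeness forces, for every nontrivial $X\in G$, the class of its translation part in $(\tfrac12\Z/\Z)^n$ to have a coordinate equal to $\tfrac12$ at some position where $X$ has entry $+1$.
\end{itemize}

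\textbf{Step 2: reduce to a finite check.} By Corollary~\ref{corollary:cib-on-holonomy-group}, it suffices to show that
\[
\overline\varphi\colon G\to (\tfrac12\Z/\Z)^n\cong \mathbb F_2^n
\]
is injective and has image a subgroup. Injectivity is immediate: $\overline\varphi(X)=0$ for $X\neq I$ would make $(X,z)$ with $z\in\Z^n$ an element of $\Gamma$, hence a torsion element $(X,z)^2=(I,Xz+z)$ modulo the lattice, which torsion-freeness forbids. Since both sides have $2^{n-1}$ elements and $\mathbb F_2^n$ is elementary abelian, the image is a subgroup precisely when it is closed under addition. Closure means
\[
\overline\varphi(XY)=\overline\varphi(X)+\overline\varphi(Y)\qquad\text{for all } X,Y\in G .
\]
The cocycle identity gives $\overline\varphi(XY)=X\overline\varphi(Y)+\overline\varphi(X)$. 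On $\tfrac12\Z/\Z$ the action of $\pm1$ is trivial, since $-\tfrac12\equiv\tfrac12$. Hence $X$ acts trivially on $\mathbb F_2^n$, so $\overline\varphi$ is a homomorphism $G\to\mathbb F_2^n$ and its image is automatically a group.

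\textbf{Step 3: conclude.} So for a GHW group in the standard form, with lattice $\Z^n$, diagonal holonomy and half-integral translations, the pair $(\Gamma,\varphi)$ is a CIB.

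\textbf{Main obstacle.} The delicate point is justifying the normal form: that the maximal abelian normal subgroup is exactly $\Z^n$ with diagonal holonomy, and that all translation parts can be taken in $\tfrac12\Z^n$. For this I would cite the classification of GHW groups in \cite{Sz25}*{Chapter 10}: each is diagonal and given by a matrix of half-translations. The half-integrality also follows directly from $(X,x)^2=(I,Xx+x)\in\Z^n$ when $X$ is diagonal with $\pm1$ entries, once any $\tfrac12$ coordinates where $X$ has entry $-1$ are conjugated away, which is where torsion-freeness and care with the origin are used.
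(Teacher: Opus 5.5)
Your proposal is correct and follows essentially the same route as the paper: cite the normal form (diagonal holonomy, half-integral translations, translation lattice $\Z^n$), get injectivity from torsion-freeness, and note that $\pm1$ diagonal matrices act trivially on $\frac12\Z^n/\Z^n$, so $\overline\varphi$ is a homomorphism whose image is a subgroup. Two minor points: showing $\overline\varphi(X)=0$ only for $X=I$ gives injectivity only once the homomorphism property is in hand, and the relevant torsion element is $(I,-z)(X,z)=(X,0)$; with the homomorphism property you prove right afterwards, the argument closes.
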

\begin{proof}
Let $\Gamma$ be a $GHW$ group of dimension $n$. Denote by $\mathcal{D}$ the group of diagonal integer matrices in $\GL(n,\Z)$. Then by \cite{Sz25}*{Theorem 9.5} and \cite{MR02}*{Lemma 1.4}, up to isomorphism, $\Gamma \subset \mathcal{D} \ltimes \frac{1}{2}\Z^n$, and the maximal normal abelian subgroup of $\Gamma$ is $\Z^n$. The torsion-freeness of $\Gamma$ ensures that $\varphi$ is one-to-one. Let's consider the quotient group $\Gamma/\Z^n$, together with the one-to-one cocycle $\overline{\varphi}$. Because $\mathcal{D}$ acts trivially on $\frac{1}{2}\Z^n/\Z^n$, for $(X,x+\Z^n), (Y,y+\Z^n) \in \Gamma/\Z^n$ we have
\[
\overline\varphi(X,x+\Z^n)+ \overline\varphi(Y,y+\Z^n) = x+\Z^n + y + \Z^n = x+\Z^n + Xy + \Z^n = x+Xy + \Z^n = \overline\varphi(XY,x+Xy + \Z^n).
\]
Since every non-zero element of $\frac{1}{2}\Z^n/\Z^n$ is an involution, we get that 
$\overline{\varphi}(\Gamma/\Z^n)$ is a group, and hence our remark is proved.
\end{proof}

\begin{example}
Another example of an infinite family of Bieberbach groups admitting a cofinite integral braces structures are the ones with holonomy of prime order. Let $\Gamma$ be such a group, with the holonomy group $G$ of prime order $p$. Again, assume that $\Z^n$ is the maximal normal abelian subgroup of $\Gamma$. One can show that $\Gamma/\Z^n$ is generated by the block matrix of the form
\[
a := 
\begin{bmatrix}
    I_k & 0 & v + \Z^k\\
    0 & X_{n-k} & \Z^{n-k}\\
    0 & 0 & 1
\end{bmatrix}
\]
where $I_k$ is the identity matrix of degree $k$, and $X_{n-k} \in \GL(n-k,\Z)$ is a matrix of order $p$ (see \cite{Ch86}*{Theorem IV.5.1 and the beginning of Section IV.6}). Moreover, $v \in \frac{1}{p}\Z^{k}$, hence
\[
\overline\varphi(a^i) = i \cdot v + \Z^n
\]
and $\overline\varphi(\Gamma/\Z^n)$ is a group.
\end{example}

\begin{proposition}
\label{proposition:fixed-multiplicative-group}
Let $(A,+,\circ)$ be a cofinite integral brace of dimension $n$. Assume that $\Gamma \subset \GL(n,\Z) \ltimes \Q^n$ is a crystallographic group isomorphic to $(A,\circ)$. Then there exists $x \in \Q^n$ such that the pair $(\Gamma^x, \varphi)$ is a left brace isomorphic to $A$, where
\[
\Gamma^x := (1,x)^{-1} \Gamma (1,x) = (1,-x) \Gamma (1,x).
\]
\end{proposition}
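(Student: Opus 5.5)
By Corollary~\ref{corollary:representation-of-cib} we already have a faithful representation $\Lambda\colon (A,\circ)\to \GL(n,\Z)\ltimes\Q^n$. Its image $\Lambda(A)$, together with the cocycle $\varphi$, is a cofinite integral brace isomorphic to $A$, and $\Lambda(\Soc(A))=\Z^n$. So it suffices to relate $\Gamma$ to $\Lambda(A)$ by a conjugation and then check that conjugating by a pure translation $(1,x)$ keeps everything inside the affine brace framework. The plan is therefore:
\begin{enumerate}
\item show that $\Gamma$ and $\Lambda(A)$ are conjugate by an element of the form $(1,x)$, after a harmless normalization;
\item show that such a conjugation carries the brace $(\Lambda(A),\varphi)$ to $(\Gamma^x,\varphi)$.
\end{enumerate}

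For the first step, both $\Gamma$ and $\Lambda(A)$ are crystallographic subgroups of $\Aff_\Z(\Q^n)$ isomorphic to $(A,\circ)$. By the second Bieberbach theorem, an isomorphism $\Gamma\to\Lambda(A)$ is induced by conjugation by some affine map $(M,m)\in\Aff(\Q^n)$ (see the Remark preceding Corollary~\ref{corollary:cib-on-holonomy-group}). Here I expect the main obstacle: we need the linear part $M$ to disappear, i.e.\ we want $\Gamma^x$ itself, not merely some conjugate of $\Gamma$, to carry the brace.

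The key observation is that a linear conjugation $(M,0)$ is compatible with $\varphi$. For $g=(X,y)$ we have $(M,0)g(M,0)^{-1}=(MXM^{-1},My)$, so $\varphi$ is transformed by the additive isomorphism $y\mapsto My$. Consequently, if $(\Delta,\varphi)$ is a brace, then so is $(M\Delta M^{-1},\varphi)$, and $\Delta\to M\Delta M^{-1}$ is a brace isomorphism: it is multiplicative, and it is additive because $\varphi$ intertwines it with the linear map $M$.

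So I would write $(M,m)=(M,0)(1,m')$ for a suitable $m'$, and transport the brace on $\Lambda(A)$ back to $\Gamma$. That is, replace the brace $\Lambda(A)$ by $(M,0)^{-1}\Lambda(A)(M,0)$. This is still a brace with cocycle $\varphi$, isomorphic to $A$, and it equals $(1,m')\Gamma(1,m')^{-1}$. Taking $x=-m'$, up to the sign convention in the definition of $\Gamma^x$, this set is exactly $\Gamma^x$. The rationality of $x$ follows because both groups lie in $\Aff_\Z(\Q^n)$ and the translations of $\Gamma$ span $\Q^n$, which forces $M$ and $m$ to be rational.

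It remains to verify that $\varphi$ on $\Gamma^x$ is bijective onto a group. This needs no separate check: $\Gamma^x$ coincides with a conjugate of $\Lambda(A)$ by $(M,0)$, so it inherits the brace structure, and the composite map $A\to\Gamma^x$ is a brace isomorphism. The only delicate bookkeeping is the order of the factors in $(M,m)$ and the direction of the conjugation; these I would fix carefully.
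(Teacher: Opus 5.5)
Your proposal is correct and follows the paper's proof. There, too, one takes $(X,x)\in\Aff(\Q^n)$ with $(X,x)\Lambda(A)(X,x)^{-1}=\Gamma$, uses $(1,-x)(X,x)=(X,0)$ to get $\Gamma^x=(X,0)\Lambda(A)(X,0)^{-1}$, and checks that $(\lambda_a,a)\mapsto(X\lambda_aX^{-1},Xa)$ is a brace isomorphism because $\varphi$ is intertwined with the linear map $X$.

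One small quibble: your side claim that $m$ is \emph{forced} to be rational is not right. It is determined only up to vectors fixed by the holonomy, since translations by such vectors centralize $\Lambda(A)$, so $m$ can only be \emph{chosen} rational — which is all you need, and exactly what the Remark you cite supplies.
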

\begin{proof}
Let $\Lambda$ be a map defined in Corollary \ref{corollary:representation-of-cib}. Then the pair $(\Lambda(A), \varphi)$ defines a cofinite integral brace isomorphic to $A$. Since the multiplicative group of $\Lambda(A)$ and $\Gamma$ are isomorphic subgroups of $\Aff(\Q^n)$, there exists an affine map $(X,x) \in \Aff(\Q^n)$ such that
\[
(X,x) \Lambda(A) (X,x)^{-1} = \Gamma.
\]
Since $(1,-x)(X,x) = (X,0)$, we get
\[
\Gamma^x = (X,0) \Lambda(A) (X,0)^{-1}.
\]
By construction, we can think of $(A,+)$ as a subgroup of $\Q^n$ such that $\Soc(A) = \Z^n$ and we get that
\[
\Lambda(A) = \{ (\lambda_a, a) : a \in A \}.
\]
We claim that the function $F \colon \Lambda(A) \to \Gamma^x$ given by
\[
F(\lambda_a,a) := (X,0)(\lambda_a,a)(X,0)^{-1} = (X \lambda_a X^{-1}, Xa)
\]
is a brace isomorphism. We have
\[
\varphi(\Gamma^x) = \{ \varphi(X \lambda_a X^{-1}, Xa) : a \in A \} = \{ Xa : a \in A \} = XA \subset \Q^n
\]
and since $X$ is an invertible matrix, $XA$ is a group. Moreover, it is easy to see that the map is one-to-one on $\Gamma^x$. Hence, the pair $(\Gamma^x,\varphi)$ defines a left brace. By definition, $F$ is an isomorphism of multiplicative groups of the braces. Now, for $a,b \in A$ we have
\[
\varphi F ( (\lambda_a,a) + (\lambda_b,b) ) = \varphi F( \lambda_{a+b}, a+b ) = X(a+b) = Xa + Xb = \varphi F(\lambda_a,a) + \varphi F (\lambda_b,b).
\]
This proves our claim.
\end{proof}

We end this section with the analogue of one of the Bieberbach theorems for cofinite integral braces.

\begin{theorem}\label{T:Bieberbach}
There exist only finitely many isomorphism classes of cofinite integral braces of dimension $n$.
\end{theorem}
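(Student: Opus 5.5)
By Proposition~\ref{P:criterion} and Rump's Proposition 4, the multiplicative group $(A,\circ)$ of any $n$-dimensional CIB $A$ is an $n$-dimensional crystallographic group. By the Bieberbach theorems there are only finitely many isomorphism classes of such groups. It therefore suffices to fix one crystallographic group $\Gamma$ and show that it carries only finitely many CIB structures up to isomorphism. As in the Remark above, we realize $\Gamma$ inside $\Aff_\Z(\Q^n)$ with maximal abelian normal subgroup exactly $\Z^n$. Let $G=\Gamma/\Z^n$ be the holonomy group.

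By Proposition~\ref{proposition:fixed-multiplicative-group}, every CIB $A$ with $(A,\circ)\cong\Gamma$ is isomorphic to a brace $(\Gamma^x,\varphi)$ for some $x\in\Q^n$. For this brace, $\Soc(\Gamma^x)$ is the translation subgroup of $\Gamma^x$, which is $(1,-x)\Z^n(1,x)=\Z^n$, since pure translations commute. So $\Gamma^x$ again has maximal abelian normal subgroup $\Z^n$ and holonomy $G$. Corollary~\ref{corollary:translational-part-image} then gives
\[
\varphi(\Gamma^x)\subset \tfrac{1}{|G|}\Z^n .
\]
Conjugating $(X,v)\in\Gamma$ by $(1,x)$ yields $(X,\,v+(X-1)x)$. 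Hence for every $(X,v)\in\Gamma$ we get $v+(X-1)x\in\frac1{|G|}\Z^n$. In particular, $x$ must satisfy these congruence conditions modulo $\frac{1}{|G|}\Z^n$.

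Next we show that only finitely many classes of $x$ matter. Replacing $x$ by $x+y$ with $y\in\frac1{|G|}\Z^n$ changes each translational part $v+(X-1)x$ by $(X-1)y\in\frac1{|G|}\Z^n$. Moreover, translating $x$ by $y\in\Z^n$ conjugates $\Gamma^x$ by $(1,y)$, an inner automorphism of $\Gamma^x$ by an element of its socle. The map $(A,\circ)\ni g\mapsto (1,-y)g(1,y)$ is the same $F$-type map as in the proof of Proposition~\ref{proposition:fixed-multiplicative-group}, only with translations. I would verify directly that $\Gamma^{x+y}=\Gamma^x$ as sets whenever $y\in\Z^n$: conjugating by an element $(1,y)$ of $\Gamma^x$ fixes the group, and the cocycle $\varphi$ is the same map, so the brace is literally the same. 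Therefore $\Gamma^x$ depends only on $x+\Z^n$.

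The main obstacle is controlling $x$ modulo $\Z^n$, since a priori $x\in\Q^n$ is arbitrary. The constraint from the first step says that $(X-1)x$ is determined modulo $\frac1{|G|}\Z^n$ by the translation parts of $\Gamma$. This controls $x$ only on the part moved by the holonomy, i.e.\ modulo the subspace $V=(\R^n)^G$ of $G$-fixed vectors. To handle the fixed directions, I would decompose $x=x_0+x_1$ with $x_1\in V\cap\Q^n$. Conjugation by $(1,x_1)$ changes nothing, because $(X-1)x_1=0$ for all $X\in G$. Thus $\Gamma^{x}=\Gamma^{x_0}$, and the fixed component can be discarded.

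On the complementary part, $x_0$ is determined by $\{(X-1)x_0\}_X$. Choosing a rational complement of $V$, this data determines $x_0$ up to a finite-index lattice, because $\sum_X (X-1)^{T}(X-1)$ is invertible on the complement. Since $(X-1)x_0$ lies in a fixed coset of $\frac1{|G|}\Z^n$, $x_0$ ranges over a discrete set, and it matters only modulo $\Z^n$ by the previous step. Hence only finitely many braces $\Gamma^x$ arise, which gives finitely many CIB structures on each $\Gamma$ and proves the theorem.
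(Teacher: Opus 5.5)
Your argument is correct. Up to the inclusion
\[
\Z^n=\Soc(\Gamma^x)\subset\Gamma^x\subset G\ltimes\tfrac{1}{|G|}\Z^n ,
\]
it coincides with the paper's proof: reduce to a fixed $\Gamma\subset\Aff_\Z(\Q^n)$ with translation lattice $\Z^n$, pass to $\Gamma^x$ via Proposition~\ref{proposition:fixed-multiplicative-group}, and apply Corollary~\ref{corollary:translational-part-image}.

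The paper stops there. Conjugation by $(1,x)$ does not change linear parts, so $\Gamma^x$ is the full preimage of its image in the finite group $G\ltimes(\frac1{|G|}\Z^n/\Z^n)$. Hence only finitely many sets $\Gamma^x$, and so only finitely many braces, can occur. The ``main obstacle'' you identify, controlling $x$ modulo $\Z^n$, never has to be faced. You already had everything you needed once you wrote $\varphi(\Gamma^x)\subset\frac1{|G|}\Z^n$.

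You instead parametrize by the conjugating vector. You show that $\Gamma^x$ depends only on $x$ modulo $\Z^n+(\Q^n)^G$, and that the admissible $x$ fill finitely many such classes, using injectivity of $x\mapsto((X-1)x)_{X\in G}$ on a complement $W$ of the fixed space. This works, but the last step should be stated precisely. Inside $W$, $x_0$ matters only modulo $\Z^n\cap W$, not ``modulo $\Z^n$'', since $\Z^n$ is not contained in $W$. It is the rationality of $W$ that makes $\Z^n\cap W$ a full-rank sublattice of the discrete group
\[
\Lambda=\{w\in W:(X-1)w\in\tfrac1{|G|}\Z^n\ \text{for all } X\in G\}.
\]
Only then is $\Lambda/(\Z^n\cap W)$ finite; discreteness of the solution set alone would not suffice.

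Your route gives an explicit finite parameter space of conjugating vectors, close in spirit to the coboundary search of Section~\ref{section:computer-calculations}. The paper's route is shorter and needs no linear algebra beyond the inclusion above.
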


\begin{proof}
By the Bieberbach theorem \cite{Sz25}*{Theorem 2.4.3}, there exist only finitely many isomorphism classes of crystallographic groups of dimension $n$. Hence, it is enough to prove that there exist only finitely many isomorphism classes of cofinite integral braces with fixed, up to isomorphism, multiplicative group. Let $\Gamma \subset \Aff_\Z(\Q^n)$ be a crystallographic group of dimension $n$, defined by short exact sequence \eqref{equation:ses:crystallographic}. By Proposition \ref{proposition:fixed-multiplicative-group}, up to isomorphism, every cofinite integral brace with multiplicative group isomorphic to $\Gamma$, is of the form $\Gamma^x$ for some $x \in {\Q}^n$. By Corollaries \ref{corollary:representation-of-cib} and \ref{corollary:translational-part-image}, every such a brace fits into the sequence
\[
\Soc(\Gamma^x) = \Z^n \subset \Gamma^x \subset G \ltimes \frac{1}{|G|} \Z^n.
\]
In particular, it is uniquely determined by the image
\[
\overline\varphi (\Gamma/\Z^n ) \subset \frac{1}{|G|} \Z^n / \Z^n,
\]
for which we only have a finite number of possibilities.
\end{proof}

\section{Classification of cofinite integral braces}\label{S:classification}

We start with a description of isomorphisms between cofinite integral braces.

Note that the map $\varphi$ is as in the previous section (see Remark \ref{remark:bijective-cocycle}).

\begin{proposition}
\label{proposition:isomorphism-by-translation}
Let $\Gamma_1, \Gamma_2 \subset \Aff_\Z(\Q^n)$ be crystallographic groups of dimension $n$. Let the pairs $(\Gamma_1, \varphi), (\Gamma_2, \varphi)$ define cofinite integral braces. Assume that $F \colon \Gamma_1 \to \Gamma_2$ is defined by the conjugation by $(1,x) \in \Aff({\Q}^n)$:
\[
\forall_{(B,b) \in \Gamma_1} F(B,b) := (1,x)(B,b)(1,x)^{-1}.
\]
If $F$ is an isomorphism of cofinite integral braces, then $F=id$.
\end{proposition}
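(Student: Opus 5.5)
The plan is to compute the effect of $F$ on translational parts and use additivity of $F$ with respect to the brace addition. For $(B,b) \in \Gamma_1$, conjugation by the translation $(1,x)$ gives
\[
F(B,b) = (1,x)(B,b)(1,-x) = (B, b + x - Bx),
\]
so $\varphi(F(B,b)) = b + (1-B)x$. The linear part is preserved, and the translational part shifts by $(1-B)x$.

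Next I would use that $F$ is a morphism of the additive structures. By the construction in Corollary~\ref{corollary:representation-of-cib}, the addition on $(\Gamma_i,\varphi)$ is transported from $\varphi(\Gamma_i) \subset \Q^n$. Additivity of $F$ therefore means that the map
\[
f := \varphi \circ F \circ \varphi^{-1} \colon \varphi(\Gamma_1) \to \varphi(\Gamma_2)
\]
is a group homomorphism. Writing $\varphi^{-1}(b) = (B_b, b)$, we get
\[
f(b) = b + (1 - B_b)x = b + x - \lambda_b(x),
\]
since in these braces $\lambda_{(B,b)}$ acts on $\varphi$-values as $B$ (the computation in the proof of Proposition~\ref{P:criterion}). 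Additivity $f(a+b) = f(a) + f(b)$ then reduces to
\[
(1 - B_{a+b})x = (1 - B_a)x + (1 - B_b)x.
\]
So the map $c \colon a \mapsto (1 - B_a)x$ is additive from $\varphi(\Gamma_1)$ into $\Q^n$.

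The key step, and the main obstacle, is showing that this forces $c = 0$. I would argue with finiteness of the quotient. Since $A/\Soc(A)$ is finite, for every $a$ there is $m \in \N$ with $ma \in \varphi(\Soc(\Gamma_1)) = \Z^n$, and then $B_{ma} = 1$. Additivity gives $m\, c(a) = c(ma) = 0$. Because $\Q^n$ is torsion-free, $c(a) = 0$, that is, $B_a x = x$ for all $a$. Hence $F(B,b) = (B, b)$ for every element, so $F = id$ and also $\Gamma_1 = \Gamma_2$. The one point to check carefully is that the domain of $c$ is the additive group $\varphi(\Gamma_1)$ and that the brace addition really corresponds to vector addition of translational parts. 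This holds by the definition of the brace given by the bijective cocycle $\varphi$, so the argument mirrors the torsion argument in Theorem~\ref{T:two-sided}.
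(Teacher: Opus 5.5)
Your proof is correct and follows essentially the paper's argument: you compute $\varphi F\varphi^{-1}(a) = a + (1-\lambda_a)x$, use additivity of $F$, and eliminate $(1-\lambda_a)x$ using a multiple $ma$ in the socle together with torsion-freeness of $\Q^n$; phrasing this as additivity of $c(a)=(1-B_a)x$ is simply a cleaner version of the paper's induction $(d-1+\lambda_{da})x = d\lambda_a x$. One harmless slip: $\varphi(\Soc(\Gamma_1))$ is the translation lattice $\{y : (1,y)\in\Gamma_1\}$, which need not equal $\Z^n$ under these hypotheses, but your argument only uses $B_{ma}=1$.
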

\begin{proof}
As in the proof of Proposition \ref{proposition:fixed-multiplicative-group}, using Corollary \ref{corollary:representation-of-cib}, we can write
\[
\Gamma_1 = \{ (\lambda_a, a) : a \in \varphi(\Gamma_1) \}.
\]
For every $a,b \in \varphi(\Gamma_1)$ we have
\[
\begin{split}
\varphi F \varphi^{-1} (a+b) &= \varphi F (\lambda_{a+b}, a+b) \\
&= \varphi ((1,x)(\lambda_{a+b}, a+b)(1,-x)) \\
&= \varphi( \lambda_{a+b}, (1-\lambda_{a+b})x + a + b) = (1-\lambda_{a+b})x + a + b
\end{split}
\]
and
\[
\varphi F \varphi^{-1} (a) + \varphi F \varphi^{-1} (b) = (1-\lambda_{a})x + a + (1-\lambda_{b})x + b.
\]
Assuming $F$ is a brace homomorphism, we get
\[
(1-\lambda_{a+b})x + a + b = (1-\lambda_{a})x + a + (1-\lambda_{b})x + b,
\]
hence
\[
(1+\lambda_{a+b})x = ( \lambda_{a} + \lambda_{b} ) x.
\]
Using induction, we get that
\[
(d-1+\lambda_{da})x = d\lambda_{a}x
\]
holds for every natural number $d$. Take $d$ such that $da \in \Soc(\Gamma_1)$. Then $\lambda_{da} = 1$ and
\[
dx = (d-1 + \lambda_{da})x = d \lambda_a x
\]
and hence $x = \lambda_a x$. Finally, we get
\[
F(\lambda_a,a) = (\lambda_a, (1-\lambda_a)x + a) = (\lambda_a,a)
\]
and the result follows.
\end{proof}

\begin{corollary}
\label{corollary:isomorphism-of-cibs}
Let $\Gamma_1, \Gamma_2$ be as in Proposition \ref{proposition:isomorphism-by-translation}. Assume that $F \colon \Gamma_1 \to \Gamma_2$ is defined by the conjugation by $(X,x) \in \Aff({\Q}^n)$:
\[
\forall_{(B,b) \in \Gamma_1} F(B,b) := (X,x)(B,b)(X,x)^{-1}.
\]
If $F$ is an isomorphism of cofinite integral braces, then
\[
F(B,b) := (X,0)(B,b)(X,0)^{-1}.
\]
for every $(B,b) \in \Gamma_1$.
\end{corollary}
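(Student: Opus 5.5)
The plan is to factor the affine map $(X,x)$ into a linear part followed by a translation, and then reduce to Proposition~\ref{proposition:isomorphism-by-translation}. In $\Aff(\Q^n)$ we have $(X,x) = (1,x)(X,0)$. Accordingly, set
\[
\Gamma_1' := (X,0)\,\Gamma_1\,(X,0)^{-1},
\]
and let $F_1\colon \Gamma_1 \to \Gamma_1'$ be conjugation by $(X,0)$ and $F_2\colon \Gamma_1' \to \Gamma_2$ be conjugation by $(1,x)$. Then $F = F_2 \circ F_1$.

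\textbf{Step 1: $F_1$ is a brace isomorphism onto $(\Gamma_1',\varphi)$.} This is the same computation as in the proof of Proposition~\ref{proposition:fixed-multiplicative-group}. We have
\[
F_1(B,b) = (XBX^{-1}, Xb), \qquad\text{so}\qquad \varphi\circ F_1 = X\circ\varphi.
\]
Hence $\varphi$ is injective on $\Gamma_1'$, and $\varphi(\Gamma_1') = X\varphi(\Gamma_1)$ is an additive group. The pair $(\Gamma_1',\varphi)$ therefore defines a left brace. It is cofinite integral, either because it is isomorphic to $(\Gamma_1,\varphi)$ via $F_1$, or by Proposition~\ref{P:criterion}. $F_1$ respects $\circ$ because it is a conjugation, and it respects $+$ because $\varphi F_1 = X\varphi$ with $X$ linear.

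One point needs checking: Proposition~\ref{proposition:isomorphism-by-translation} is stated for subgroups of $\Aff_\Z(\Q^n)$, while $XBX^{-1}$ need not be integral. I expect this to be harmless. The proof of that proposition only uses the brace structure given by $\varphi$, the representation $\Gamma_1=\{(\lambda_a,a)\}$, and the finiteness of $A/\Soc(A)$. So I would either remark that the argument works verbatim in $\Aff(\Q^n)$, or note that $\lambda_a$ is just the linear part of the element, which holds for any group on which $\varphi$ is bijective onto a group.

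\textbf{Step 2: $F_2$ is the identity.} Since $F$ and $F_1$ are brace isomorphisms, so is $F_2 = F\circ F_1^{-1}\colon \Gamma_1' \to \Gamma_2$. It is conjugation by a pure translation $(1,x)$. By Proposition~\ref{proposition:isomorphism-by-translation} (in the slightly generalized form from Step 1), $F_2 = id$, so $\Gamma_2 = \Gamma_1'$. It follows that
\[
F(B,b) = F_2F_1(B,b) = F_1(B,b) = (X,0)(B,b)(X,0)^{-1}
\]
for all $(B,b)\in\Gamma_1$, which is the claim.

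The main (mild) obstacle is the hypothesis mismatch in Step 1: ensuring that the intermediate group $\Gamma_1'$ satisfies the hypotheses needed to apply Proposition~\ref{proposition:isomorphism-by-translation}. The rest is direct composition.
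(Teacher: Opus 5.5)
Your argument is correct and is the derivation the paper leaves implicit: factor $(X,x)=(1,x)(X,0)$, use that conjugation by $(X,0)$ is a brace isomorphism (as in the proof of Proposition~\ref{proposition:fixed-multiplicative-group}), and apply Proposition~\ref{proposition:isomorphism-by-translation} to the remaining translation. Your integrality concern resolves itself without generalizing that proposition: conjugation by $(1,x)$ does not change linear parts, so $\Gamma_1'=(1,x)^{-1}\Gamma_2(1,x)$ has the same linear parts as $\Gamma_2$ and therefore already lies in $\Aff_\Z(\Q^n)$.
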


Before stating the main result of this section, let us recall some standard facts from the theory of crystallographic groups. Assume that $\Gamma \subset \Aff_\Z(\Q^n)$ is one such group with the maximal normal abelian subgroup $\Z^n$. Let it be defined by extension \eqref{equation:ses:crystallographic} that corresponds to the cohomology class
\[
\gamma \in H^1(G,\Q^n/\Z^n) \cong H^2(G,\Z^n).
\]
\begin{definition}[\cite{HP89}*{page 102}]
Let the function $s \colon G \to \Q^n$ be given by
\[
s(g) = x
\]
if $(g,x) \in \Gamma$.
We call $s$ a \emph{vector system} of $\Gamma$.
\end{definition}

An easy calculation shows that the map $\overline{s} \colon G \to \Q^n/\Z^n$, given by
\[
\overline{s}(g) = s(g) + \Z^n
\]
is a $1$-cocycle which is a representative of the cohomology class $\gamma$. Moreover, we have
\[
\Gamma/\Z^n = \{ (g, \overline{s}(g)) : g \in G \}.
\]

Note that there is an action of the normalizer $N$ of $G$ in $\GL(n,\Z)$ on the cohomology group $H^1(G,\Q^n/\Z^n)$, given by
\[
n * [c] = [n*c],
\]
where $[c]$ and $[n*c]$ are the cohomology classes of cocycles $c$ and $n*c$, respectively. The action of $N$ on the group of $1$-cocycles is given by the formula
\[
n*c(g) = n c(n^{-1}gn).
\]
We denote by $N_\gamma$ the stabilizer of the class $\gamma$ under the action $*$.

We state the main result of the paper.
\begin{theorem}
\label{theorem:classification}
Let $\Gamma \subset \Aff_\Z(\Q^n)$ be an $n$-dimensional crystallographic group, which fits into the short exact sequence \eqref{equation:ses:crystallographic}. Assume that the cohomology class $\gamma \in H^1(G,\Q^n/\Z^n)$ corresponds to this extension. Let $\mathcal S$ be the set of all one-to-one cocycles $\overline{s}$ from the class $\gamma$ for which $\overline{s}(G)$ is a group. Then the isomorphism classes of cofinite integral braces for which $\Gamma$ is the multiplicative group are in the bijection with the orbits of the action of $N_\gamma$ on the elements of $\mathcal S$.
\end{theorem}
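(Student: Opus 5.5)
The plan is to build an explicit map from $\mathcal S$ to isomorphism classes of CIBs with multiplicative group $\Gamma$, prove it is surjective, and show that its fibres are exactly the $N_\gamma$-orbits.

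\emph{Definition of the map.} Take $\overline{s}\in\mathcal S$. Since $\overline{s}$ lies in the class $\gamma$, there is $\bar x\in\Q^n/\Z^n$ with
\[
\overline{s}(g)=\overline{s}_\Gamma(g)+(g-1)\bar x ,
\]
where $\overline{s}_\Gamma$ is the vector system cocycle of $\Gamma$. Lift $\bar x$ to some $x\in\Q^n$ and set $\Gamma_{\overline{s}}:=(1,x)^{-1}\Gamma(1,x)$, up to the sign convention for $x$. This group is again a subgroup of $\Aff_\Z(\Q^n)$ with maximal abelian normal subgroup $\Z^n$. Its reduced cocycle is $\overline{s}$, so by Corollary~\ref{corollary:cib-on-holonomy-group} the pair $(\Gamma_{\overline{s}},\varphi)$ is a CIB. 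Its multiplicative group is isomorphic to $\Gamma$ via conjugation.

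Well-definedness needs some care. Different lifts of $\bar x$ differ by elements of $\Z^n$, and conjugation by an integral translation preserves the subgroup. A different $\bar x'$ with $(g-1)\bar x'=(g-1)\bar x$ for all $g$ gives the same quotient $\Gamma_{\overline{s}}/\Z^n$. Since $\Gamma_{\overline{s}}$ is the full preimage of this quotient, the subgroup itself is unchanged. In short, $\Gamma_{\overline{s}}$ is determined by $\overline{s}$, as already observed in the proof of Theorem~\ref{T:Bieberbach}.

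\emph{Surjectivity.} Let $A$ be any CIB with $(A,\circ)\cong\Gamma$. By Proposition~\ref{proposition:fixed-multiplicative-group}, $A\cong(\Gamma^x,\varphi)$ for some $x\in\Q^n$. The reduced translation cocycle of $\Gamma^x$ is $\overline{s}_\Gamma$ plus the coboundary of $\pm x$, so it lies in $\gamma$. By Corollary~\ref{corollary:cib-on-holonomy-group} it is injective with image a group, hence it belongs to $\mathcal S$.

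\emph{Fibres are $N_\gamma$-orbits.} First, if $n\in N_\gamma$, then conjugation by $(n,0)$ maps $\Gamma_{\overline{s}}$ onto a group whose reduced cocycle is $n*\overline{s}$. This cocycle lies in $n*\gamma=\gamma$, so the image group is $\Gamma_{n*\overline{s}}$. The conjugation map $F$ satisfies $\varphi F=n\varphi$, and $n$ is additive, so $F$ is a brace isomorphism; this is the computation from the proof of Proposition~\ref{proposition:fixed-multiplicative-group}.

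Conversely, suppose $F\colon\Gamma_{\overline{s}}\to\Gamma_{\overline{s}'}$ is a brace isomorphism. Both groups are crystallographic subgroups of $\Aff_\Z(\Q^n)$ with lattice $\Z^n$, so by the Remark after Proposition~\ref{P:criterion} the map $F$ is conjugation by some affine $(X,y)$ with $X\in\GL(n,\Z)$. Corollary~\ref{corollary:isomorphism-of-cibs} then shows that $F$ is conjugation by $(X,0)$. This $X$ normalizes $G$, so $X\in N$, and the reduced cocycle of the image is $X*\overline{s}=\overline{s}'$. It remains to see that $X\in N_\gamma$. Both $\overline{s}$ and $\overline{s}'$ lie in $\gamma$, while $X*\overline{s}$ lies in $X*\gamma$. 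Hence $X*\gamma=\gamma$.

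\emph{Main obstacle.} The delicate step is the converse direction: knowing that every brace isomorphism between two such groups is induced by conjugation in $\Aff(\Q^n)$ with linear part in $\GL(n,\Z)$. Integrality holds because $X$ must preserve the socle $\Z^n$. The harder issue is that a group isomorphism sends $\Z^n$ to $\Z^n$ and extends, by the second Bieberbach theorem, to an affine conjugation. I would cite that theorem here together with the Remark preceding Corollary~\ref{corollary:cib-on-holonomy-group}.
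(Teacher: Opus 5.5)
Your proposal is correct and follows essentially the same route as the paper: reduce to the conjugates $\Gamma^x$ via Proposition~\ref{proposition:fixed-multiplicative-group}, identify those that are cofinite integral braces with $\mathcal S$ via Corollary~\ref{corollary:cib-on-holonomy-group}, and use Corollary~\ref{corollary:isomorphism-of-cibs} to show that a brace isomorphism is conjugation by some $(X,0)$ with $X$ normalizing $G$, so that $\overline{s'} = X*\overline{s}$. Beyond the paper, you also spell out three points it leaves implicit: that $N_\gamma$-translates give isomorphic braces, that $X$ actually lies in $N_\gamma$, and that the second Bieberbach theorem is needed to make an arbitrary brace isomorphism an affine conjugation.
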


\begin{proof}
By Proposition \ref{proposition:fixed-multiplicative-group} all cofinite integral braces with multiplicative group $\Gamma$ are, up to isomorphism, elements of the set $\{ \Gamma^x : x \in {\Q}^n \}$. For every $x \in {\Q}^n$ denote by $s^x$ a vector system of $\Gamma^x$. Note that any choice of $s^x$ uniquely determines the $1$-cocycle $\overline{s^x}$. Moreover, every $1$-cocycle from the class $\gamma$ determines the group $\Gamma^x$. Hence, by Corollary \ref{corollary:cib-on-holonomy-group}, the set of pairs $(\Gamma^x,\varphi)$ that define cofinite integral braces is in bijection with $\mathcal S$.

Assume that the pair $(\Gamma, \varphi)$ defines a cofinite integral brace. Assume that for some $x \in {\Q}^n$ the brace given by $(\Gamma^x, \varphi)$ is isomorphic to $\Gamma$. If $s$ and $s^x$ are vector systems for $\Gamma$ and $\Gamma^x$, respectively, then -- without loss of generality -- we can assume that
\[
s^x(g) = s(g) + (g-1)x
\]
for every $g \in G$. This means that both cocycles $\overline{s}$ and $\overline{s^x}$ lie in the cohomology class $\gamma$, in particular, they both are the elements of the set $\mathcal S$.

By Corollary \ref{corollary:isomorphism-of-cibs} there exists $X \in \GL(n,{\Q})$ such that
\[
\Gamma^x = (X,0) \Gamma (X^{-1},0).
\]
There are two consequences of this fact. First is that $X \Z^n = \Z^n$, which means that $X \in \GL(n,\Z)$. The second -- $G = XGX^{-1}$, hence $X$ normalizes $G$, i.e., $X \in N_{\GL(n,\Z)}(G)$. For every $g \in G$ we have
\[
(X,0)(g,s(g))(X^{-1},0) = (XgX^{-1}, Xs(g) ) \in \Gamma^x,
\]
hence
\[
s^x(XgX^{-1}) - Xs(g) \in \Z^n
\]
and therefore $\overline{s^x} = X * \overline{s}$.
\end{proof}

\section{Computing cofinite integral brace structures}
\label{section:computer-calculations}

We begin by determining the maximal possible exponents of additive groups of finite left braces with a fixed multiplicative group. The following lemma provides the necessary restriction.

\begin{lemma}
\label{lemma:socle-restriction}
Let $B$ be a finite left brace with multiplicative group $G$ and additive group $A$. Then $\Soc(B)$ is an abelian normal subgroup of $G$, it is isomorphic to a subgroup of $A$, and
\[
G/\Soc(B)
\]
is isomorphic to a subgroup of $\Aut(A)$.
\end{lemma}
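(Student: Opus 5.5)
The proof uses the homomorphism $\lambda \colon (B,\circ) = G \to \Aut(A)$, $\lambda_a(b) = -a + a\circ b$, from the lemma in Section~\ref{S:basic}, together with the identity $\Soc(B) = \ker\lambda$.

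\emph{The quotient.} Since $\Soc(B) = \ker\lambda$, it is the kernel of a group homomorphism on $G$, so it is a normal subgroup of $G$. The first isomorphism theorem then gives
\[
G/\Soc(B) \cong \lambda(G) \subset \Aut(A),
\]
which is the last assertion.

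\emph{The socle itself.} For $a, b \in \Soc(B)$ the definition of the socle gives $a \circ b = a + b$. Hence on $\Soc(B)$ the multiplicative and additive operations coincide. In particular $a\circ b = a+b = b+a = b\circ a$, so $(\Soc(B),\circ)$ is abelian.

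\emph{Embedding into $A$.} The socle is an ideal, hence an additive subgroup of $A$. The identity map $(\Soc(B),\circ) \to (\Soc(B),+) \subset A$ is a group isomorphism, because the two operations agree on $\Soc(B)$. This realizes $\Soc(B)$ as a subgroup of $A$.

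There is no real obstacle. The only point needing care is that $\Soc(B)$ is closed under $+$, so that it really is a subgroup of $A$. This follows because $\Soc(B)$ is an ideal, or directly: it is the kernel of $\lambda$, the sum of two socle elements equals their $\circ$-product, and $\ker\lambda$ is closed under $\circ$. Finiteness of $B$ is not needed for any of these steps; it matters only for how the lemma is applied later.
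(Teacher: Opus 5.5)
Your proof is correct and follows the same route as the paper: $\Soc(B)=\ker\lambda$ gives normality and, via the first isomorphism theorem, the embedding $G/\Soc(B)\cong\lambda(G)\leq\Aut(A)$, while the agreement of $\circ$ and $+$ on the socle gives commutativity and the embedding into $A$. Your explicit check that $\Soc(B)$ is closed under addition is a detail the paper leaves implicit.
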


\begin{proof}
Consider the homomorphism
\[
\lambda \colon G \to \Aut(A)
\]
associated with the brace structure. By definition, $\Soc(B)$ is a normal subgroup of $G$. Moreover, in $\Soc(B)$ the multiplicative operation coincides with the additive one, hence $\Soc(B)$ is isomorphic to an abelian subgroup of both $G$ and $A$. Finally, by the first isomorphism theorem,
\[
G/\Soc(B)\cong \lambda(G)\leq \Aut(A).
\]
\end{proof}

Lemma~\ref{lemma:socle-restriction} yields a practical necessary condition on the additive group of a finite left brace with multiplicative group $G$. Indeed, if $A$ is the additive group of such a brace, then one must find an abelian normal subgroup $N\trianglelefteq G$ that can occur as the socle, with $N$ embedding into $A$ and with the quotient $G/N$ acting faithfully on $A$. This observation leads to a finite search procedure for the maximal possible exponents of the additive group, presented in Algorithm \ref{alg:maximal-exponents}. For brevity, we use the following notation:

\begin{itemize}
\item $\mathcal{D}(o)$ -- the sequence of the positive divisors of the natural number $o$ in decreasing order;
\item $\mathcal{A}(o,e)$ -- the set of representatives of isomorphism classes of abelian groups of order $o$ and exponent $e$.
\end{itemize}

\begin{algorithm}

\SetKwFunction{MaxExp}{MaximalExponentsByMultiplicativeGroup}

\fn{\MaxExp{$G$}}{

    \If{$G$ is not solvable}{
        \Ret{$\emptyset$}\;
    }
    
    $E:=\emptyset$\;
    
    \For{$e \in \mathcal{D}(|G|)$\nllabel{alg:maximal-exponents:outer-loop}}{
        \If{$e$ divides some element of $E$}{
            \algkw{go to line~\ref{alg:maximal-exponents:outer-loop}}\;
        }
        \For{$A \in \mathcal{A}(|G|,e)$}{
            \For{every abelian normal subgroup $N\trianglelefteq G$}{
                \If{$N$ embeds into $A$ and $G/N$ embeds into $\Aut(A)$}{
                    $E:=E\cup\{e\}$\;
                    \algkw{go to line~\ref{alg:maximal-exponents:outer-loop}}\;
                }
            }
        }
    }
    \Ret{$E$}\;
}
\caption{Finding maximal possible exponents of additive groups of finite left braces with multiplicative group $G$}
\label{alg:maximal-exponents}
\end{algorithm}

\begin{lemma}
Let $w \colon G \to \Q^n/\Z^n$ be a $1$-coboundary. There exists $x \in \Q^n$ such that
\[
w(g) = (g-1)x + \Z^n
\]
for every $g \in G$.
\end{lemma}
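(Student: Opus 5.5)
By definition, a $1$-coboundary $w \colon G \to \Q^n/\Z^n$ has the form $w(g) = (g-1)\bar{x}$ for some fixed element $\bar{x} \in \Q^n/\Z^n$, where $G \subset \GL(n,\Z)$ acts on $\Q^n/\Z^n$ through its action on $\Q^n$. The plan is simply to lift $\bar x$ to $\Q^n$ and check that the resulting formula is well defined modulo $\Z^n$. Concretely, I will choose any representative $x \in \Q^n$ with $x + \Z^n = \bar{x}$. This is possible because the projection $\Q^n \to \Q^n/\Z^n$ is surjective.

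The key step is to verify that the action of $G$ on $\Q^n/\Z^n$ is induced from $\Q^n$. For $g \in G \subset \GL(n,\Z)$ we have $g\Z^n = \Z^n$, so $g(x+\Z^n) = gx + \Z^n$ is well defined. Hence
\[
(g-1)\bar{x} = g(x+\Z^n) - (x + \Z^n) = (gx - x) + \Z^n = (g-1)x + \Z^n
\]
holds for every $g \in G$. This is exactly the claimed formula $w(g) = (g-1)x + \Z^n$.

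There is no real obstacle. The only point needing care is that $G$ consists of integral matrices, so $(g-1)\Z^n \subset \Z^n$. This makes the value $(g-1)x + \Z^n$ independent of the chosen lift $x$. It also confirms that the $G$-module structure on $\Q^n/\Z^n$ used in the definition of $1$-coboundaries (as in the discussion of vector systems before Theorem~\ref{theorem:classification}) is the quotient of the module structure on $\Q^n$.
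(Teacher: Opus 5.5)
Your proposal is correct and is essentially the paper's proof: choose any lift $x\in\Q^n$ of the element $\bar x\in\Q^n/\Z^n$ with $w=\delta^1\bar x$, and use that the $G$-action on $\Q^n/\Z^n$ is induced from $\Q^n$ to get $w(g)=(g-1)x+\Z^n$. Your remark that $g\Z^n=\Z^n$ makes this well defined only spells out what the paper uses implicitly.
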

\begin{proof}
Take any $x \in \Q^n$ such that $w = \delta^1(x+\Z^n)$. Then, for every $g \in G$, we have
\[
w(g) = (g-1)(x+\Z^n) = (g-1)x + \Z^n.
\]
\end{proof}

\begin{corollary}
\label{corollary:coboundary-basis}
Let $G \subset \GL(n,\Z)$ be a finite group. Define maps $b_1,\ldots,b_n \colon G \to \Z^n$ by
\[
b_i(g) = (g-1)^{(i)},
\]
where $g \in G$ and $(g-1)^{(i)}$ denotes the $i$th column of the matrix $g-1$. Let $w \colon G \to \Q^n/\Z^n$ be a $1$-coboundary. Then
\[
w(g) = \sum_{i=1}^n \alpha_i b_i(g) + \Z^n
\]
for every $g \in G$, where $\alpha_1,\ldots,\alpha_n \in \Q$.
\end{corollary}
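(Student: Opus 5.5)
By the preceding lemma, every $1$-coboundary $w \colon G \to \Q^n/\Z^n$ has the form $w(g) = (g-1)x + \Z^n$ for some $x \in \Q^n$. So the plan is to apply that lemma, expand $x$ in the standard basis, and use linearity of $g-1$ to split $(g-1)x$ into columns. Concretely, I fix $x$ as in the lemma and write $x = \sum_{i=1}^n \alpha_i e_i$, where $e_1,\ldots,e_n$ is the standard basis of $\Q^n$ and $\alpha_i \in \Q$ are the coordinates of $x$. Rationality of the $\alpha_i$ is automatic because $x \in \Q^n$.

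For each $g \in G$ I compute
\[
(g-1)x = \sum_{i=1}^n \alpha_i (g-1)e_i = \sum_{i=1}^n \alpha_i (g-1)^{(i)} = \sum_{i=1}^n \alpha_i b_i(g),
\]
since $(g-1)e_i$ is exactly the $i$th column of $g-1$. Reducing modulo $\Z^n$ then gives $w(g) = \sum_i \alpha_i b_i(g) + \Z^n$, which is the claim.

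I also note that each $b_i$ is indeed $\Z^n$-valued, because $G \subset \GL(n,\Z)$ and so $g-1$ is an integer matrix. The $\alpha_i$ do not depend on $g$, since $x$ is chosen once for the whole coboundary.

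There is no real obstacle here. The only point needing care is that the same $x$ works uniformly for all $g$, and the lemma provides exactly that.
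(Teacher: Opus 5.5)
Your proposal is correct and follows essentially the same route as the paper: take $x\in\Q^n$ with $w(g)=(g-1)x+\Z^n$, expand $x=\sum_i\alpha_i e_i$, and use $(g-1)e_i=b_i(g)$. Your remarks that the $b_i$ are $\Z^n$-valued and that the single choice of $x$ works for every $g$ are correct; the paper leaves both points implicit.
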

\begin{proof}
Denote by $e_i$ the $i$th column of the identity matrix of degree $n$. Let
\[
x = \alpha_1 e_1 + \ldots + \alpha_n e_n \in \Q^n,
\]
be a vector such that $w = \delta^1(x+\Z^n)$. Obviously, $\alpha_1, \ldots, \alpha_n \in \Q$ and for every $g \in G$ we have
\[
w(g) = (g-1)x + \Z^n = \sum_{i=1}^n \alpha_i (g-1)e_i + \Z^n = \sum_{i=1}^n \alpha_i b_i(g) + \Z^n.
\]
\end{proof}

From the above proof, one can simply read off that $b_1,\ldots,b_n$ generate the vector space $B^1(G,\Q^n)$ of $1$-coboundaries. We want to work with its subgroup, which is closely related to $B^1(G,\Z^n)$, but in general, can be a bit larger.
\begin{lemma} The set
$\overline{B^1}(G,\Z^n) := \{ f \colon G \to \Z^n : f \in B^1(G,\Q^n) \}$
of $1$-coboundaries that take value in $\Z^n$ is a group and $B^1(G,\Z^n)$ is its finite index subgroup.
\end{lemma}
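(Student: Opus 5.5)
We split the claim into three parts: $\overline{B^1}(G,\Z^n)$ is a group, it contains $B^1(G,\Z^n)$, and the index is finite.

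\emph{Group structure.} The space $B^1(G,\Q^n)$ is a $\Q$-subspace of the group of all maps $G \to \Q^n$. The set of maps $G \to \Z^n$ is a subgroup of that group. Hence $\overline{B^1}(G,\Z^n)$ is the intersection of two subgroups of $\mathrm{Map}(G,\Q^n)$, and so is itself a subgroup.

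\emph{Containment.} Every $f \in B^1(G,\Z^n)$ has the form $f(g) = (g-1)x$ with $x \in \Z^n$. Such an $f$ is $\Z^n$-valued and lies in $B^1(G,\Q^n)$, so $B^1(G,\Z^n) \subseteq \overline{B^1}(G,\Z^n)$.

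\emph{Finite index.} Consider the $\Q$-linear map
\[
\delta \colon \Q^n \to \mathrm{Map}(G,\Q^n), \qquad x \mapsto \bigl(g \mapsto (g-1)x\bigr).
\]
Its image is $B^1(G,\Q^n)$ and its kernel is $(\Q^n)^G$. Put $M = \delta^{-1}(\mathrm{Map}(G,\Z^n))$. Then
\[
\overline{B^1}(G,\Z^n) = \delta(M), \qquad B^1(G,\Z^n) = \delta(\Z^n),
\]
and $\Z^n \subseteq M$. It therefore suffices to show that $\delta(\Z^n)$ has finite index in $\delta(M)$.

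Both groups are finitely generated and free. Indeed, $\delta(M)$ is a subgroup of $\mathrm{Map}(G,\Z^n) \cong \Z^{n|G|}$, which is free of finite rank because $G$ is finite. So it is enough to check that they have the same rank, or equivalently that $\delta(M)/\delta(\Z^n)$ is torsion.

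Take $f = \delta(x) \in \delta(M)$. Since $x \in \Q^n$, there is $d \in \N$ with $dx \in \Z^n$. Then $df = \delta(dx) \in \delta(\Z^n)$. So the quotient is torsion, and since it is a quotient of a finitely generated group it is finite. Equivalently, $\delta(\Z^n)$ and $\delta(M)$ both span the $\Q$-space $B^1(G,\Q^n)$, so they are lattices of the same rank in it.

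\emph{Bound on the index.} The finite-index step is the only place needing care. If an explicit bound is wanted, we would argue as follows. By Corollary \ref{corollary:coboundary-basis}, $\delta(M)$ is a discrete group spanned by the columns $b_i$. The index is then bounded in terms of the elementary divisors of the integer matrix formed by stacking the blocks $g-1$ for $g \in G$.
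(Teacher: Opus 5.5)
Your proof is correct and follows the paper's route: $\overline{B^1}(G,\Z^n)$ is a finitely generated free abelian group, and the quotient by $B^1(G,\Z^n)$ is torsion because every $x\in\Q^n$ has a common denominator, hence the quotient is finite; you also supply the finite-generation justification via $\mathrm{Map}(G,\Z^n)\cong\Z^{n|G|}$, which the paper leaves implicit. The paper phrases the torsion step by choosing a single integer $e$ that moves a generating set $\overline{b}_1,\ldots,\overline{b}_r$ into $B^1(G,\Z^n)$, which is the same idea, and your closing paragraph on an explicit index bound is optional and not needed for the lemma.
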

\begin{proof}
It is easy to check that $\overline{B^1}(G,\Z^n)$ is a subgroup of $B^1(G,\Q^n)$, hence if $r = \dim_{\Q}(B^1(G,\Q^n)) \leq n$, we get that it is a free abelian group of rank $r$. Since $b_1,\ldots,b_n$ generate the space $B^1(G,\Q^n)$, for any set $\overline{b_1},\ldots,\overline{b_r}$ of generators of $\overline{B^1}(G,\Z^n)$ there exists an integer $e$ such that $e\overline{b}_1,\ldots,e\overline{b}_r \in B^1(G,\Z^n)$.
\end{proof}

\begin{remark}
\label{remark:finding-basis}
Finding the rank $r$ and the basis $\overline{b}_1, \ldots, \overline{b}_r$ of $\overline{B^1}(G,\Z^n)$ uses the invariant factor theorem, see \cite{CR62}*{Theorem 16.8} for example. If one puts the elements of the group $G$ in a sequence, say $(g_1, \ldots, g_l)$, then the columns of the matrix
\[
B = \begin{bmatrix}
    b_1(g_1) & \ldots & b_n(g_1) \\
    \vdots   & \ddots & \vdots   \\
    b_1(g_l) & \ldots & b_n(g_l)
\end{bmatrix}
=
\begin{bmatrix}
    g_1 - 1\\
    \vdots \\
    g_l - 1
\end{bmatrix}
\]
generate a subgroup of $\Z^{nl}$, isomorphic to $B^1(G,\Z^n)$. Let $S = LBR$ be the Smith normal form of $B$, with row and column transformation matrices $L$ and $R$, respectively. A coboundary $b = \delta^1x$, where $x \in \Q^n$, is an element of $\overline{B^1}(G,\Z^n)$ if and only if $Bx \in \Z^{nl}$. We get that $v = LBx = SR^{-1}x \in \Z^{nl}$. Moreover, if $r = \dim_{\Q}(B^1(G,\Q^n)) = \operatorname{rk}_\Z(\overline{B^1}(G,\Z^n))$, then only $r$ first coordinates of $v$ may be non-zero and $Bx = L^{-1}v$ is a $\Z$-linear combination of the first $r$ columns of $L^{-1}$. Now, if the invariant factors of $B$ are $\delta_1, \ldots, \delta_r$, the equality $L^{-1}S = BR$ shows that for $1 \leq i \leq r$, the $i$th column of $L^{-1}$ equals $BR(\frac{1}{\delta_i}e_i)$, where $e_i$ is the $i$th column of the identity matrix of degree $n$. Hence, every such column defines an element $\overline{b}_i \in \overline{B^1}(G,\Z^n)$. The elements $\overline{b}_1,\ldots, \overline{b}_r$ are linearly independent set of generators of $\overline{B^1}(G,\Z^n)$, therefore they form its basis.
\end{remark}

The following proposition lays a ground for Algorithm \ref{alg:finding-cibs-with-exponent}. %

\begin{proposition}
Let $G \subset \GL(n,\Z)$ be a finite group. Let $s \colon G \to \Q^n$ be a vector system, i.e., a map such that the composition
\[
\begin{tikzcd}[column sep=large]
\overline s \colon G \arrow[r,"s"] & \Q^n \arrow[r,"\bmod{\Z^n}"] & \Q^n/\Z^n
\end{tikzcd}
\]
is a $1$-cocycle. Assume that $e \in \Z$ is such that $e \cdot s(G) \subset \Z^n$ and that for some $x \in \Q^n$, $\overline{s^x}(G) \subset \Q^n/\Z^n$ is a group of the exponent that divides $e$. Then there exist integers $\beta_1,\ldots,\beta_r$ such that
\[
s^x - s = \sum_{i=1}^r \frac{\beta_i}{e} \overline{b}_i,
\]
where $\{\overline{b}_1,\ldots,\overline{b}_r\}$ is a basis of $\overline{B^1}(G,\Z^n)$.
\end{proposition}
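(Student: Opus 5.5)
Throughout, $s^x$ denotes the vector system of $\Gamma^x$, normalized as in the proof of Theorem~\ref{theorem:classification}:
\[
s^x(g) = s(g) + (g-1)x .
\]
Hence $s^x - s = \delta^1 x$ is a $\Q^n$-valued coboundary. The plan is to show that $e(s^x - s)$ takes values in $\Z^n$. Then $e(s^x-s)$ lies in $\overline{B^1}(G,\Z^n)$, and expanding it in the basis $\overline b_1,\ldots,\overline b_r$ with integer coefficients $\beta_i$ gives the claim after dividing by $e$.

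\textbf{Step 1: show $e\,s^x(G)\subset\Z^n$.} By hypothesis, $\overline{s^x}(G)$ is a subgroup of $\Q^n/\Z^n$ whose exponent divides $e$. So for every $g\in G$ we have $e\,\overline{s^x}(g)=0$ in $\Q^n/\Z^n$, that is, $e\,s^x(g)\in\Z^n$. This uses only the exponent assumption, not bijectivity.

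\textbf{Step 2: conclude $e\,\delta^1x = e(s^x-s)$ is integral.} We are given $e\,s(G)\subset \Z^n$. Combined with Step 1, for every $g$,
\[
e\,(g-1)x = e\,s^x(g) - e\,s(g) \in \Z^n .
\]
So $e\,\delta^1 x$ is a coboundary in $B^1(G,\Q^n)$ (namely $\delta^1(ex)$) with values in $\Z^n$. By the definition preceding the lemma on $\overline{B^1}$, it is therefore an element of $\overline{B^1}(G,\Z^n)$.

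\textbf{Step 3: expand in the basis.} By the lemma and Remark~\ref{remark:finding-basis}, $\overline{B^1}(G,\Z^n)$ is free abelian with basis $\overline b_1,\ldots,\overline b_r$. Hence there are integers $\beta_i$ with
\[
e(s^x-s)=\sum_{i=1}^r \beta_i\overline b_i ,
\]
and dividing by $e$ in the $\Q$-vector space of maps $G\to\Q^n$ gives the formula.

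\textbf{Main subtlety.} The only real care needed is the normalization of $s^x$. If $s^x$ were an arbitrary vector system of $\Gamma^x$, it could differ from $s+\delta^1x$ by a $\Z^n$-valued function. In that case $s^x-s$ would no longer be a coboundary, and the identity would hold only modulo $\Z^n$. I would therefore fix the representative $s^x=s+\delta^1x$ explicitly, as the paper does in the proof of Theorem~\ref{theorem:classification}, since the conclusion is an equality of $\Q^n$-valued maps. With that convention the argument is purely formal.
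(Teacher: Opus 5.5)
Your proof is correct and follows essentially the same route as the paper: both show that $e(s^x-s)$ is $\Z^n$-valued, using $e\,s(G)\subset\Z^n$ together with the exponent hypothesis, and deduce that the coefficients $e\alpha_i$ are integers. The only difference is in the last step: the paper verifies it with a Smith normal form computation on the matrix of $\overline b_1,\ldots,\overline b_r$, while you note directly that $e(s^x-s)=\delta^1(ex)$ lies in $\overline{B^1}(G,\Z^n)$ by definition and therefore has integer coordinates in its $\Z$-basis, which is equally valid and shorter.
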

\begin{proof}
Since $s^x-s$ is a $1$-coboundary, there exist rational numbers $\alpha_1,\ldots,\alpha_r$ such that
\[
s^x-s = \sum_{i=1}^r \alpha_i \overline{b}_i.
\]
By assumption, for every $g \in G$ we have that
\begin{equation}
\label{equation:coboundaries}
e \sum_{i=1}^r \alpha_i \overline{b}_i(g) \in \Z^n.
\end{equation}
Let $G = \{g_1,\ldots,g_l\}$ and 
\[
B = \begin{bmatrix}
\overline{b}_1(g_1) & \ldots & \overline{b}_r(g_1)\\
\vdots & \ddots & \vdots\\
\overline{b}_1(g_l) & \ldots & \overline{b}_r(g_l)\\
\end{bmatrix}.
\]
Let $S$ be the Smith normal form of $B$ and let $S = LBR$ for unimodular matrices $L$ and $R$. Since the maps $\overline{b}_1, \ldots, \overline{b}_r$ form a basis of $\overline{B^1}(G,\Z^n)$, we get that
\[
S = \begin{bmatrix}
    I\\0\\ \vdots \\0
\end{bmatrix},
\]
where $I$ is the identity matrix of degree $r$.

Let
\[
C = e \begin{bmatrix}
\alpha_1 \\ \vdots \\ \alpha_r
\end{bmatrix}.
\]
Condition \eqref{equation:coboundaries} translates to
\[
BC = L^{-1}SR^{-1}C \in \Z^{nl}.
\]
Now, because $L$ is unimodular, the form of $S$ implies that $R^{-1}C \in \Z^r$, hence $C$ itself lies in $\Z^r$. It is enough to take
\[
\beta_i = e \alpha_i
\]
for $1 \leq i \leq r$.
\end{proof}

\begin{algorithm}
\KwIn{a vector system $s \colon G \to \Q^n$, an integer $e$ such that $e \cdot s(G) \subset \Z^n$}
\KwOut{the set $S$ of vector systems of the form $s^x$ that correspond to cofinite integral braces and such that the exponent of $(s^x(G)+\Z^n)/\Z^n$ divides $e$}

\SetKwFunction{FindCIBs}{FindCIBsWithExponent}

\renewcommand{\baselinestretch}{1.15}\selectfont

\fn{\FindCIBs{$s,e$}}{
    $S := \emptyset$\;
    calculate $\overline{b}_1,\ldots,\overline{b}_r$ as in Remark~\ref{remark:finding-basis}\;
    \For{$(\alpha_1,\ldots,\alpha_r) \in \{0,\ldots,e-1\}^r$}{
        $s' := s + \sum_{i=1}^r \frac{\alpha_i}{e}\, \overline{b}_i$\;
        \If{$(s'(G)+\Z^n)/\Z^n$ is a subgroup of $\Q^n/\Z^n$ of order $|G|$}{
            $S := S \cup \{s'\}$\;
        }
    }
    \Ret{$S$}\;
}
\caption{Finding cofinite integral brace structures on a given crystallographic group}
\label{alg:finding-cibs-with-exponent}
\end{algorithm}

In practice, there may be one problem with Algorithm \ref{alg:finding-cibs-with-exponent} -- its cost grows quickly with the dimension and the exponent bound. The next lemma provides a useful reduction step: when the multiplicative group has composite order, one may restrict to a suitable proper $p$-subgroup and thereby reduce the search to smaller groups.

\begin{lemma}\label{L:reduction}
Let $G$ be a group of composite order, $s \colon G \to \Q^n$ be a vector system of a cofinite integral brace, and let $p$ be a prime divisor of $|G|$. Then there exists a proper $p$-subgroup $P \leq G$ such that the restricted map
\[
s|_P \colon P \to \Q^n
\]
is a vector system of a cofinite integral brace.
\end{lemma}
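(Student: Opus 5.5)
The plan is to work entirely at the level of the finite quotient brace and then lift back to the crystallographic group. Since $s$ is the vector system of a cofinite integral brace, by Corollary~\ref{corollary:cib-on-holonomy-group} the cocycle $\overline{s}\colon G\to\Q^n/\Z^n$ is one-to-one and $A:=\overline{s}(G)$ is a finite abelian group. Thus $G$ carries a finite left brace structure with additive group isomorphic to $A$. Its $\lambda$-action, transported to $A$, is $\lambda_g(a)=g\,a$, i.e.\ the linear action of $G$ on $\Q^n/\Z^n$; this follows from the computation $\varphi(\lambda_a(b))=Xy$ in the proof of Proposition~\ref{P:criterion}.

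\textbf{Key step 1: reduce to finding a $G$-invariant subgroup of $A$.} Let $I\le A$ be an additive subgroup that is invariant under the action of $G$, and put $P:=\overline{s}^{-1}(I)$. I claim that $P$ is a subgroup of $G$. Indeed, the cocycle identity gives $\overline{s}(gh)=\overline{s}(g)+g\,\overline{s}(h)$. So if $\overline{s}(g),\overline{s}(h)\in I$, then $\overline{s}(gh)\in I+gI=I$, and $P$ is closed under multiplication; finiteness of $G$ then gives that $P$ is a subgroup. Moreover $|P|=|I|$, because $\overline{s}$ is injective. Then $\overline{s}|_P$ is a one-to-one cocycle on $P$ whose image $I$ is a group. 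The preimage $\Gamma_P$ of $P$ in $\Gamma$ is a finite-index subgroup containing $\Z^n$, hence still crystallographic of dimension $n$. Running the argument of Corollary~\ref{corollary:cib-on-holonomy-group}, both directions of which only use that $\Z^n$ is the translation lattice, together with Proposition~\ref{P:criterion}, shows that $s|_P$ is the vector system of a cofinite integral brace. It therefore suffices to produce a $G$-invariant subgroup $I$ of $p$-power order with $1<|I|<|G|$.

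\textbf{Key step 2: produce $I$, splitting into two cases.} If $|G|$ is not a power of $p$, take $I=A_p$, the Sylow $p$-subgroup of $A$. It is characteristic in $A$, so it is $G$-invariant because $G$ acts by automorphisms. It is nontrivial since $p\mid |A|=|G|$, and it is proper. If instead $|G|=p^k$ with $k\ge 2$ (this is where ``composite order'' is used), consider the socle layer $A[p]=\{a\in A: pa=0\}$. This is a nonzero $\mathbb{F}_p$-vector space, and the $p$-group $G$ acts on it linearly. By the standard orbit-counting fact that a $p$-group acting on a finite $p$-group fixes a nonzero element, there is $0\ne a\in A[p]$ with $ga=a$ for all $g\in G$. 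Then $I=\langle a\rangle$ has order $p<p^k=|G|$ and is $G$-invariant, so $P$ is a proper $p$-subgroup.

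I expect the main obstacle to be the justification in Key step 1 that the restriction really defines a CIB on the smaller crystallographic group $\Gamma_P$. The concern is that the maximal abelian normal subgroup of $\Gamma_P$ may be strictly larger than $\Z^n$, so Corollary~\ref{corollary:cib-on-holonomy-group} cannot be quoted verbatim. I would handle this by checking directly, via Proposition~\ref{P:criterion}, that $\varphi$ is injective on $\Gamma_P$ and that $\varphi(\Gamma_P)$ is the group $\overline{s}^{-1}$-lift $I+\Z^n$. Injectivity is inherited from $\Gamma$, and the group property follows exactly as in the second half of the proof of Corollary~\ref{corollary:cib-on-holonomy-group}.
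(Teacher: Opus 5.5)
Your proof is correct. When $|G|$ is not a power of $p$, it is the paper's argument made explicit. The paper invokes a lemma on finite braces to take $P$ to be a Sylow $p$-subgroup. Your Key step~1 says that a $G$-invariant subgroup $I\le\overline{s}(G)$ pulls back to a subgroup of order $|I|$, i.e.\ left ideals are subbraces; applied to $I=A_p$, it proves exactly that fact.

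In the $p$-group case you genuinely diverge. The paper cites a result on braces of prime-power order to take $P$ among the maximal normal subgroups of $G$, i.e.\ of index $p$. You instead take $P$ of order $p$, the preimage of the span of a nonzero $G$-fixed vector of $A[p]$. Both choices satisfy the statement as written, which read literally is even satisfied by $P=1$. Your route is entirely self-contained, using only the cocycle identity and the fixed-point lemma for $p$-groups.

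What the paper's choice buys is usefulness in the reduction described in the subsequent remark, where restricting to $P$ pins down the coefficients $\alpha_i$. There an index-$p$ subgroup constrains the search far more than one of order $p$. You can recover the index-$p$ choice within your own framework by running the same fixed-point argument on the dual of the $\mathbb{F}_p[G]$-module $A/pA$. A nonzero $G$-fixed functional has $G$-invariant kernel, and the preimage $H\supseteq pA$ of that kernel is a $G$-invariant subgroup of index $p$ in $A$. Then $\overline{s}^{-1}(H)$ has index $p$ in $G$, so it is maximal and normal.

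Lastly, the obstacle you anticipated does not occur. The pure translations in $\Gamma_P$ are exactly $\Z^n$, and the translation subgroup of a crystallographic group is its unique maximal abelian normal subgroup. So Corollary~\ref{corollary:cib-on-holonomy-group} applies to $\Gamma_P$ verbatim, though your direct check through Proposition~\ref{P:criterion} is equally valid.
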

\begin{proof}
By \cite{CV26}*{Lemma 12.2.10}, if $G$ is not a $p$-group, then $P$ is one of the $p$-Sylow subgroups of $G$. On the other hand, if $G$ is a $p$-group then by \cite{CV26}*{Corollary 13.1.4}, $P$ may be found in the set of maximal normal subgroups of $G$.
\end{proof}

\begin{remark}
Although the above lemma provides only a necessary condition for the existence of a cofinite integral brace structure on a crystallographic group, this condition can in practice be turned into a sufficient one.

Let $G$ be a group and let $s \colon G \to \Q^n$ be a vector system. Suppose that, for some proper subgroup $P \leq G$, we have found a vector system $s' \colon P \to \Q^n$ such that:
\begin{enumerate}
\renewcommand{\labelenumi}{\arabic{enumi})}
\item $s'$ corresponds to a cofinite integral brace;
\item after composing with the natural map $\Q^n \to \Q^n/\Z^n$, the two cocycles $\overline{s'}$ and $\overline{s}_{|P}$ lie in the same cohomology class in $H^1(P,\Q^n/\Z^n)$.
\end{enumerate}
Then, once an exponent $e$ is fixed (for instance, one may take $e = |G|$), in Algorithm~\ref{alg:finding-cibs-with-exponent} it is enough to consider only those tuples $(\alpha_1,\ldots,\alpha_r)$ for which
\[
s' = \left(s + \frac{\alpha_1}{e} \overline{b}_1 + \cdots + \frac{\alpha_r}{e} \overline{b}_r\right)_{|P}.
\]
Equivalently, the coefficients $\alpha_1/e,\ldots,\alpha_r/e$ can be determined effectively by solving over $\Q/\Z$ a system of equations of the form
\[
\begin{bmatrix}
p_1 - 1\\
\vdots\\
p_k - 1
\end{bmatrix} \cdot x
=
\begin{bmatrix}
s'(p_1)-s(p_1)\\
\vdots\\
s'(p_k)-s(p_k)
\end{bmatrix},
\]
where $P=\{p_1,\ldots,p_k\}$. This method is described, for example, in \cite{Lu13}.
\end{remark}

\section{Example: Hantzsche-Wendt group}\label{S:GHW}
In this section we start with structural properties of cofinite integral braces with the multiplicative group being generalized Hantzsche-Wendt groups. By Lemma \ref{lemma:hantzsche-wendt-existence} we know that every such group admits a cofinite integral brace structure. At the end we present the example of classification of CIB structures on the three-dimensional Hantzsche-Wendt group, i.e, the minimal non-abelian example of a GHW group lying in $\operatorname{SL}(n,\Z)\ltimes\Q^n$, also known in the literature as the Promislow group.

\begin{proposition}
\label{proposition:exponent-in-hw-groups}
Let $\Gamma$ be a generalized Hantzsche-Wendt group, defined by the short exact sequence \eqref{equation:ses:crystallographic}. Assume that the pair $(\Gamma,\varphi)$ defines a cofinite integral brace. Then the exponent of the additive group of the quotient brace $\Gamma/\Z^n$ divides $4$.
\end{proposition}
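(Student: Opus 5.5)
The plan is to reduce to the diagonal normal form of GHW groups used in Lemma~\ref{lemma:hantzsche-wendt-existence}, and then read off the bound coordinate by coordinate from the requirement that $\overline{\varphi}(\Gamma/\Z^n)$ be closed under addition.

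\emph{Step 1 (normal form).} By \cite{Sz25}*{Theorem 9.5} and \cite{MR02}*{Lemma 1.4} there is a GHW group $\Gamma_0\subset\mathcal D\ltimes\frac12\Z^n$ whose maximal abelian normal subgroup is $\Z^n$, together with an affine map $(X,x)\in\Aff(\Q^n)$ conjugating $\Gamma_0$ onto $\Gamma$. Both groups have maximal abelian normal subgroup $\Z^n$, so $X\Z^n=\Z^n$ and hence $X\in\GL(n,\Z)$.

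Conjugation by $(X,0)$ maps translational parts linearly by $X$. Since $X$ preserves $\Z^n$, it induces an isomorphism of $\Q^n/\Z^n$ which preserves exponents. It therefore suffices to treat $X^{-1}$-conjugates, and as in the proof of Proposition~\ref{proposition:fixed-multiplicative-group} we may assume $\Gamma=\Gamma_0^y=(1,-y)\Gamma_0(1,y)$ for some $y\in\Q^n$. Concretely, the vector system of $\Gamma$ is
\[
s(g)=s_0(g)+(g-1)y,\qquad g\in G\subset\mathcal D,
\]
where $s_0(g)\in\frac12\Z^n$. Each $g\in G$ is diagonal with entries $g_i=\pm1$, so the $i$th coordinate of $s(g)$ is either $s_0(g)_i$ (if $g_i=1$) or $s_0(g)_i-2y_i$ (if $g_i=-1$).

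\emph{Step 2 (using closure).} By Corollary~\ref{corollary:cib-on-holonomy-group}, $\overline{s}(G)$ is a subgroup of $\Q^n/\Z^n$. Hence for every $g\in G$ there is $k\in G$ with $2s(g)\equiv s(k)\pmod{\Z^n}$.

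Fix a coordinate $i$ and some $g$ with $g_i=-1$. Then $2s(g)_i\equiv -4y_i\pmod\Z$, because $2s_0(g)_i\in\Z$. On the other side, $s(k)_i$ lies in $\frac12\Z$ or in $\frac12\Z-2y_i$.

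In the first case $4y_i\in\frac12\Z$. In the second case $-4y_i\equiv -2y_i+\frac{m}{2}$, so $2y_i\in\frac12\Z$. In both cases $8y_i\in\Z$, and therefore $s(g)_i=s_0(g)_i-2y_i\in\frac14\Z$.

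For coordinates with $g_i=1$ we already have $s(g)_i=s_0(g)_i\in\frac12\Z$. So $4s(g)\in\Z^n$ for all $g$, and the additive group $\overline{s}(G)$ of $\Gamma/\Z^n$ has exponent dividing $4$.

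\emph{Main obstacle.} The delicate point is Step 1: justifying that the conjugating map has integral linear part, and that the linear part may be discarded without changing the exponent, so that only the translation $y$ remains. Once the problem is reduced to $s=s_0+\delta y$ with diagonal $G$, the computation in Step 2 is elementary.
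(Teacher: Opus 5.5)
Your proof is correct and follows essentially the paper's route: you reduce to the diagonal form $\Gamma_0\subset\mathcal D\ltimes\frac12\Z^n$ with vector system $s_0(g)+(g-1)y$, and then use closure of $\overline{s}(G)$ in a single coordinate to force $8y_i\in\Z$. The differences are cosmetic: you argue directly from $2\overline{s}(g)\in\overline{s}(G)$ and make the integrality of the conjugating linear part explicit, whereas the paper argues by contradiction on orders (an element of order $k\ge 8$ in the projected cyclic group would force one of order $k/2\notin\{1,2,k\}$) and takes the reduction from Proposition~\ref{proposition:fixed-multiplicative-group} and the proof of Theorem~\ref{theorem:classification}.
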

\begin{proof}
By Lemma \ref{lemma:hantzsche-wendt-existence}, we can assume that $\Gamma \subset \mathcal{D} \ltimes \frac{1}{2}\Z^n$. Let $s \colon G \to \frac{1}{2}\Z^n$ be the vector system of $\Gamma$. By the proof of Theorem \ref{theorem:classification}, any cofinite integral brace with the multiplicative group $\Gamma$ has a vector system of the form $s^x$ such that the image of the cocycle $\overline s^x \colon G \to \Q^n/\Z^n$ defined by $s^x$ is a group. Let $(B,b) \in \Gamma$ be such that $B \neq 1$. This means that
\[
B = \diag(B_1, \ldots, B_n) \text{ and } b = (b_1, \ldots, b_n)^T,
\]
where $B_i = \pm 1$ and $b_i \in \frac{1}{2}\Z$, for $1 \leq i \leq n$. Let $x = (x_1,\ldots,x_n) \in \R^n$. The $i$th coordinate of $s^x(B)$ equals
\[
(1-B_i)x_i + b_i \in \{ b_i, 2x_i + b_i \}.
\]
The order of $(B,b) + \Z^n$ in the additive group is finite, hence there exists the smallest positive integer $k$ such that
\[
k b_i \in \Z \text{ or } k (2x_i+b_i) \in \Z.
\]
In particular, since $G$ is a $2$-group, $k$ is a power of $2$. Assume that $k \geq 8$. Then $B_i=-1$, $kb_i \in \Z$ and
\[
k (2x_i+b_i) \in \Z \Leftrightarrow k 2x_i \in \Z.
\]
Denote by $p^{(i)} \colon \Q^n/\Z^n \to \Q/\Z$ the projection to the $i$th coordinate. We can see that the elements of $p^{(i)}\overline{s}^x(G)$ are either of order $1,2$ or $k$, hence $\overline{s}^x(G)$ is not a group.
\end{proof}

\begin{theorem}\label{T:3dHW}
There are three isomorphism classes of cofinite integral braces for which the multiplicative group is the three-dimensional Hantzsche-Wendt group.
\end{theorem}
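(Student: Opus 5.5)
We will apply Theorem~\ref{theorem:classification} directly: fix the standard model of the Hantzsche-Wendt group, enumerate the set $\mathcal S$ of injective cocycles in the class $\gamma$ whose image is a group, and count the $N_\gamma$-orbits. We take the standard presentation $\Gamma\subset\mathcal D\ltimes\frac12\Z^3$ with holonomy $G=\{1,A,B,C\}\cong C_2^2$, $A=\diag(1,-1,-1)$, $B=\diag(-1,1,-1)$, $C=AB=\diag(-1,-1,1)$. Its vector system is $s(A)=(\frac12,\frac12,0)^T$, $s(B)=(0,\frac12,\frac12)^T$, $s(C)=s(A)+As(B)\equiv(\frac12,0,\frac12)^T \pmod{\Z^3}$. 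Every other element of the class $\gamma$ has the form $\overline{s^x}(g)=\overline{s}(g)+(g-1)x+\Z^3$ with $x\in\Q^3$. Since $g-1$ is diagonal with entries $0$ or $-2$, only $2x$ modulo $\Z^3$ matters. Write $y=2x+\Z^3=(y_1,y_2,y_3)\in(\Q/\Z)^3$. Then
\[
\overline{s^x}(A)=(\tfrac12,\tfrac12-y_2,-y_3),\quad
\overline{s^x}(B)=(-y_1,\tfrac12,\tfrac12-y_3),\quad
\overline{s^x}(C)=(\tfrac12-y_1,-y_2,\tfrac12).
\]

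By Proposition~\ref{proposition:exponent-in-hw-groups}, the image $\overline{s^x}(G)$ must have exponent dividing $4$. By Corollary~\ref{corollary:translational-part-image}, it lies in $\frac14\Z^3/\Z^3$. The coordinatewise argument in the proof of that proposition forces each $y_i\in\frac14\Z/\Z$, so only $4^3=64$ values of $y$ need checking. Injectivity is automatic, because $\Gamma$ is torsion-free. The real condition is closure: the set $\{0,u,v,w\}$, with $u,v,w$ the three images above, is a subgroup of $(\Q/\Z)^3$ of order $4$. This happens exactly when either it is $C_2^2$ (all of $2u,2v,2w$ vanish and $u+v=w$), or it is $C_4$ (say $2u=2v=w$ up to relabelling and $u+v=0$, with $u=3v$). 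We will solve these $\Z/4$-linear conditions in the $y_i$ coordinate by coordinate. For instance, the $C_2^2$ case forces $y\in\frac12\Z^3$ together with $u+v=w$, which reduces to $y_1+y_2+y_3$-type parity constraints. Listing all solutions then gives $\mathcal S$ explicitly.

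Next we compute $N=N_{\GL(3,\Z)}(G)$. This is the group of signed permutation matrices, $\mathcal D\rtimes S_3$ of order $48$, because $G$ is the diagonal Klein four-group and its normalizer permutes the coordinate eigenlines. We then determine the stabilizer $N_\gamma$. Permutations act on $G$ by permuting $A,B,C$, and must carry $\overline{s}$ to a cohomologous cocycle. We expect $N_\gamma$ to contain all of $\mathcal D$ together with the cyclic permutations, $C_3$; whether transpositions composed with suitable signs also stabilize $\gamma$ must be checked by direct computation. The action is $n*c(g)=nc(n^{-1}gn)$. On the parameter $y$, $\mathcal D$ acts by sign changes on coordinates, that is, $y_i\mapsto -y_i$ combined with the affine shift coming from $(n-1)$ terms, and permutations permute coordinates, again possibly with affine corrections.

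The final step is to partition $\mathcal S$ into $N_\gamma$-orbits and verify that there are exactly three. The natural invariants separating them are the isomorphism type of the additive group $\overline{s^x}(G)$ ($C_2^2$ versus $C_4$), and, in the $C_4$ case, which element of $G$ maps to the element of order $2$. Because $N_\gamma$ may permute $A,B,C$ only partially, this choice can split into two orbits. The main obstacle will be getting $N_\gamma$ exactly right, in particular deciding which permutations of the axes stabilize $\gamma$, since the count of orbits, and hence the claimed number three, hinges on it. We will settle this by checking explicitly whether $n*\overline{s}-\overline{s}$ is a coboundary for a transposition times each sign pattern.
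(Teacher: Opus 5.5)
Your route is the paper's own: apply Theorem~\ref{theorem:classification} to the diagonal model with values in $\frac14\Z^3$, then count orbits. The paper does this enumeration by machine and displays the orbits. Your parametrization $y=2x+\Z^3$ and your formulas for $u,v,w$ are correct. The gap is that the proposal stops exactly where the content of the proof lies. You never determine $\mathcal S$, $N_\gamma$ or the orbits, so ``three'' is a target rather than a conclusion. Moreover, the concrete predictions you make about that computation are wrong. In the $C_2^2$ case there is no parity constraint: once $2y_3=0$ one has $u+v=(\tfrac12-y_1,-y_2,\tfrac12)=w$, so all eight $y\in(\tfrac12\Z/\Z)^3$ lie in $\mathcal S$. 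The $C_4$ case gives exactly $y=(\tfrac12,0,\pm\tfrac14)$ and its cyclic shifts $(\pm\tfrac14,\tfrac12,0)$, $(0,\pm\tfrac14,\tfrac12)$. Thus $|\mathcal S|=14$.

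Next, $N_\gamma$ is the whole normalizer $N$ of order $48$. Write $g_i$ for the element of $G$ with $+1$ in position $i$. Then $\overline s(g_i)=\tfrac12(e_i+e_{i+1})$ (indices mod $3$), so diagonal matrices and $3$-cycles fix $\overline s$ exactly. A transposition $P$ instead gives $P*\overline s=\overline{s^z}$ with $2z\equiv(\tfrac12,\tfrac12,\tfrac12)$. So on parameters $N_\gamma$ acts by $y\mapsto Dy$, by $y\mapsto Py$ for $3$-cycles, and by $y\mapsto Py+(\tfrac12,\tfrac12,\tfrac12)$ for transpositions. The orbits are $\{0,(\tfrac12,\tfrac12,\tfrac12)\}$, the other six points of $(\tfrac12\Z/\Z)^3$, and the six $C_4$ points, of sizes $2,6,6$, as in the paper.

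This is the reverse of your heuristic. The $C_4$ cocycles already form one orbit under $\mathcal D\rtimes C_3$: a sign change such as $\diag(1,1,-1)$ swaps $\pm\tfrac14$, and the $3$-cycle moves the element mapped to the element of order two. So that invariant separates nothing. It is the $C_2^2$ cocycles that split. The test is whether the image is the even-weight subgroup $\{0,\tfrac12(e_1+e_2),\tfrac12(e_2+e_3),\tfrac12(e_1+e_3)\}$, which every signed permutation preserves, or contains vectors with $1$, $2$ and $3$ nonzero coordinates. Without the transpositions you would get $4+1=5$ orbits, so their affine shift is exactly what produces three. This computation is what your proof still has to contain.
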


\begin{proof}
Let $a$ and $b$ be generators of the holonomy group $G$ of the three-dimensional Hantzsche-Wendt group $\Gamma$. One can take
\[
a = \begin{bmatrix}
1 & 0 & 0\\
0 &-1 & 0\\
0 & 0 &-1\\
\end{bmatrix}
\text{ and }
b = \begin{bmatrix}
-1 & 0 & 0\\
 0 & 1 & 0\\
 0 & 0 &-1\\
\end{bmatrix}.
\]
The vector system of $\Gamma$ that defines the cohomology class $\gamma \in H^1(G,\Q^n/\Z^n)$ can be given as follows
\begin{equation}
\label{equation:hw-vector-system}
s(a) = \begin{bmatrix}
\frac{1}{2}\\
\frac{1}{2}\\
0
\end{bmatrix},
s(b) = \begin{bmatrix}
0\\
\frac{1}{2}\\
\frac{1}{2}\\
\end{bmatrix},
s(ab) = \begin{bmatrix}
\frac{1}{2}\\
0\\
\frac{1}{2}\\
\end{bmatrix},
s(1) = \begin{bmatrix}
0\\
0\\
0
\end{bmatrix}.
\end{equation}
The stabilizer $N_\gamma$ equals $O(3,\Z)$ and is generated by the diagonal and permutation matrices of $\GL(3,\Z)$. By Proposition \ref{proposition:exponent-in-hw-groups}, for every $x \in \Q^3$ such that the vector system $s^x$ defines a cofinite integral brace we have $s^x(G) \in \frac{1}{4}\Z^3$. In this case we can simply take all denominators to be $4$ and add numerators modulo $4$. Additionally, the neutral element of $G$ is mapped to the zero vector. Hence, it is enough to write down values of non-trivial elements of $G$. We do it in the row convention, i.e., we transpose the values, and we use the sequence $s(a),s(b),s(ab)$. For example, the vector system \eqref{equation:hw-vector-system} is encoded as
\[
\begin{bmatrix}
    2 & 2 & 0\\
    0 & 2 & 2\\
    2 & 0 & 2
\end{bmatrix}.
\]
Now, the action of the permutation matrix on $\overline{s^x}$ is precisely the action of the corresponding permutation on the matrix. The action of a diagonal matrix is simply multiplying the matrix from the right.

In the case of our group, using methods described in Section \ref{section:computer-calculations}, we get $14$ vector systems that define cofinite integral braces. The orbits of the corresponding matrices 
\[
A = \begin{bmatrix}
    2 & 2 & 0\\
    0 & 2 & 2\\
    2 & 0 & 2
\end{bmatrix},
B = \begin{bmatrix}
    2 & 2 & 2\\
    2 & 2 & 0\\
    0 & 0 & 2
\end{bmatrix}
\text{ and }
C = \begin{bmatrix}
    2 & 2 & 3\\
    2 & 2 & 1\\
    0 & 0 & 2
\end{bmatrix}
\]
under the action of $N_\gamma$ are presented in Diagrams \ref{diagram:orbit-1}, \ref{diagram:orbit-2} and \ref{diagram:orbit-3}. Since the orbits have 14 elements in total, the result follows.
\end{proof}

\begin{remark}
The stabilizers of $A,B$ and $C$ under the action of $N_\gamma$ are as follows:
\begin{enumerate}
    \item $\operatorname{Stab}_{N_\gamma}(A) = \langle [\pm 1, \pm 1, \pm 1], (1\,2\,3) \rangle \cong C_2^3 \rtimes A_3 = C_2 \wr A_3$.
    \item $\operatorname{Stab}_{N_\gamma}(B) = \langle [\pm 1, \pm 1, \pm 1] \rangle \cong C_2^3$.
    \item $\operatorname{Stab}_{N_\gamma}(C) = \langle [\pm 1, \pm 1, 1], [1,1,-1](1\,2) \rangle \cong D_8$.
\end{enumerate}
Mark that we identify permutations with their matrices, and that $[a,b,c]$ encodes the diagonal matrix
\[
\begin{bmatrix}
a & 0 & 0\\
0 & b & 0\\
0 & 0 & c
\end{bmatrix}.
\]
\end{remark}

\begin{fdiagram}[ht]
\[
\begin{tikzcd}[ampersand replacement=\&, column sep=large]
\begin{bmatrix}
    2 & 2 & 0\\
    0 & 2 & 2\\
    2 & 0 & 2
\end{bmatrix}
\arrow[leftrightarrow]{r}[above]{(1\,2)}[below]{(1\,3)} \&
\begin{bmatrix}
    2 & 0 & 2\\
    2 & 2 & 0\\
    0 & 2 & 2
\end{bmatrix}
\end{tikzcd}
\]
\caption{Cofinite integral braces on three-dimensional Hantzsche-Wendt group. Orbit of $A$.}
\label{diagram:orbit-1}
\end{fdiagram}

\begin{fdiagram}[ht]
\[
\begin{tikzcd}[ampersand replacement=\&, column sep=large]
\begin{bmatrix}
    2 & 2 & 2\\
    2 & 2 & 0\\
    0 & 0 & 2
\end{bmatrix}
\arrow[leftrightarrow]{r}{(1\,2)}
\arrow[leftrightarrow]{d}{(2\,3)}\&
\begin{bmatrix}
    2 & 2 & 0\\
    2 & 2 & 2\\
    0 & 0 & 2
\end{bmatrix}
\arrow[leftrightarrow]{r}{(2\,3)}
\arrow[leftrightarrow]{d}{(1\,3)}\&
\begin{bmatrix}
    2 & 0 & 2\\
    0 & 2 & 0\\
    2 & 2 & 2
\end{bmatrix}
\arrow[d,leftrightarrow, "(1\,2)"] \\
    \begin{bmatrix}
    2 & 2 & 2\\
    0 & 2 & 0\\
    2 & 0 & 2
\end{bmatrix}
\arrow[r,leftrightarrow, "(1\,2)"]
\&
    \begin{bmatrix}
    2 & 0 & 0\\
    2 & 2 & 2\\
    0 & 2 & 2
\end{bmatrix}
\arrow[r,leftrightarrow, "(2\,3)"]
\&
    \begin{bmatrix}
    2 & 0 & 0\\
    0 & 2 & 2\\
    2 & 2 & 2
\end{bmatrix}
\end{tikzcd}
\]
\caption{Cofinite integral braces on three-dimensional Hantzsche-Wendt group. Orbit of $B$.}
\label{diagram:orbit-2}
\end{fdiagram}

\begin{fdiagram}[h!t]
\[
\begin{tikzcd}[ampersand replacement=\&, column sep=huge]
    \begin{bmatrix}
    2 & 2 & 3\\
    2 & 2 & 1\\
    0 & 0 & 2
\end{bmatrix}
\arrow[leftrightarrow]{r}[above]{(1\,2)}[below]{[1,1,-1]}
\arrow[d,leftrightarrow, "(2\,3)"]\&
\begin{bmatrix}
    2 & 2 & 1\\
    2 & 2 & 3\\
    0 & 0 & 2
\end{bmatrix}
\arrow[r,leftrightarrow, "(2\,3)"]
\arrow[d,leftrightarrow, "(1\,3)"]\&
\begin{bmatrix}
    2 & 1 & 2\\
    0 & 2 & 0\\
    2 & 3 & 2
\end{bmatrix}
\arrow[d,leftrightarrow, "(1\,2)"] \\
    \begin{bmatrix}
    2 & 3 & 2\\
    0 & 2 & 0\\
    2 & 1 & 2
\end{bmatrix}
\arrow[r,leftrightarrow, "(1\,2)"]
\&
    \begin{bmatrix}
    2 & 0 & 0\\
    3 & 2 & 2\\
    1 & 2 & 2
\end{bmatrix}
\arrow[r,leftrightarrow, "(2\,3)"]
\&
    \begin{bmatrix}
    2 & 0 & 0\\
    1 & 2 & 2\\
    3 & 2 & 2
\end{bmatrix}
\end{tikzcd}
\]
\caption{Cofinite integral braces on three-dimensional Hantzsche-Wendt group. Orbit of $C$.}
\label{diagram:orbit-3}
\end{fdiagram}

\section{Results in low dimensions}\label{S:low_dim}

This section describes the calculations on crystallographic groups up to dimension $6$. These were enumerated with the usage of the CARAT computer package \cite{CARAT}. Calculation of CIB structures on these groups was obtained using GAP \cite{GAP4}. The code needed to reproduce the results can be found in the repository \cite{Lu26git}.

The basic goal was to calculate the number of structures of cofinite integral braces, up to isomorphism, on those groups. This is presented in Table \ref{table:cibs-on-crystallographic-groups}. Table \ref{table:cibs-on-bieberbach-groups} gives the analogous information for Bieberbach groups. Having those structures enumerated, we were able to confront the two- and three-dimensional phenomena described by Rump. We will present this in the following series of remarks.

\begin{remark}
\label{remark:quotient-brace-rump}
The table at the end of \cite{Ru24}*{Section 6} presents the possible additive structures of $A/\Soc(A)$ for every cofinite integral brace of dimension $2$. In Table \ref{table:dimension-two-details} we give a more detailed view, i.e., we also show the number of cofinite integral brace structures on every wallpaper group. Note that there is one such structure that is not listed by Rump (the appropriate number is marked with an asterisk).
\end{remark}

\begin{remark}
An analogous approach, but for three-dimensional Bieberbach groups, is given in Table \ref{table:dimension-three-details}. Note that our approach detected 5 distinct cofinite integral brace structures on each of the groups $\mathcal B_3$ and $\mathcal B_4$. In comparison, Rump establishes the existence of a CIB on $\mathcal B_3$ in the proof of \cite{Ru24}*{Theorem 4} and identifies two non-isomorphic CIB structures on $\mathcal B_4$ in \cite{Ru24}*{Example 15}.%
\end{remark}

\begin{remark}\label{reducible}
Let $\Gamma$ be a crystallographic group given by the short exact sequence \eqref{equation:ses:crystallographic}. If the dimension of $\Gamma$ equals $2$, then by \cite{Ru24}*{Proposition 11} the reducibility of the action of $G$ on $L$ implies the existence of a cofinite integral brace structure of $\Gamma$. This is also true for $3$-dimensional torsion-free case: every such Bieberbach group admits a cofinite integral brace structure, while the above action is reducible. This is not the case for all crystallographic groups in general. Already in dimension $3$ there is an example of a crystallographic group, with the $G$ action the same as for the group $\mathcal G_5$, which does not have any cofinite integral brace structure. %
\end{remark}

\begin{remark}\label{asymmorphic}
Recall that a crystallographic group is called \emph{symmorphic} if it splits over its translation subgroup as a semidirect product $\mathbb{Z}^n \rtimes G$, and \emph{asymmorphic} otherwise. Rump remarks just before \cite{Ru24}*{Corollary on page 296} that if a wallpaper group does not have any cofinite integral brace structure then it is symmorphic. Since every non-trivial Bieberbach group is asymmorphic, Table \ref{table:cibs-on-bieberbach-groups} shows that this implication fails to hold already in dimension 4. The only two 4-dimensional Bieberbach groups without any CIB structure are denoted by CARAT symbol \texttt{min.20.1.4.3} and \texttt{group.47.1.3.6}. These correspond to \texttt{4/1/6/4} and \texttt{13/1/3/8} of \cite{BBNWZ}, respectively.
\end{remark}

\begin{table}[ht]
    \[
    \begin{array}{r|r|r|r|r}
        \multirow{2}{*}{\text{dim}} & \multicolumn{3}{c|}{\text{crystallographic groups}} & \multirow{2}{*}{\text{CIBs}}\\ \cline{2-4}
        & \text{total} & \text{with CIB} & \text{without CIB} & \\ \hline
        1 & 2 & 2 & 0 & 2\\
        2 & 17 & 12 & 5 & 16\\
        3 & 219 & 142 & 77 & 319\\
        4 & 4,783 & 3,341 & 1,442 & 15,888\\
        5 & 222,018 & 181,901 & 40,117 & 2,194,473\\
        6 & 28,927,915 & 26,466,343 & 2,461,572 & 927,879,728
    \end{array}
    \]
    \caption{Cofinite integral braces (CIBs) on crystallographic groups}
    \label{table:cibs-on-crystallographic-groups}
\end{table}

\begin{table}[ht]
    \[
    \begin{array}{r|r|r|r|r}
        \multirow{2}{*}{\text{dim}} & \multicolumn{3}{c|}{\text{Bieberbach groups}} & \multirow{2}{*}{\text{CIBs}}\\ \cline{2-4}
        & \text{total} & \text{with CIB} & \text{without CIB} & \\ \hline
        1 & 1 & 1 & 0 & 1\\
        2 & 2 & 2 & 0 & 3\\
        3 & 10 & 10 & 0 & 24\\
        4 & 74 & 72 & 2 & 376\\
        5 & 1,060 & 1,022 & 38 & 14,014\\
        6 & 38,746 & 37,602 & 1,144 & 1,383,718\\
    \end{array}
    \]
    \caption{Cofinite integral braces on Bieberbach groups}
    \label{table:cibs-on-bieberbach-groups}
\end{table}

\begin{table}[ht]
    \centering
    \begin{tabular}{l|l|c|cccccc}
       $\Gamma$     & CARAT         & $G$    & $1$ & $C_2$ & $C_3$ & $C_2^2$ & $C_4$ & $C_2 \times C_4$\\ \hline
       \textbf{p1}  & min.2.1.1.0   & $1$    & 1 \\
       \textbf{p2}  & group.1.1.1.0 & $C_2$  & & 1\\
       \textbf{pm}  & min.3.1.1.0   & $C_2$  & & 1\\
       \textbf{pg}  & min.3.1.1.1   & $C_2$  & & 2\\
       \textbf{pmm} & group.2.1.1.0 & $C_2^2$& & & & 1\\
       \textbf{pmg} & group.2.1.1.1 & $C_2^2$& & & & 1 & 1\\
       \textbf{pgg} & group.2.1.1.3 & $C_2^2$& & & & 2 & {1\textsuperscript{\!*}}\\
       \textbf{cm}  & min.3.1.2.0   & $C_2$  & & 1\\
       \textbf{cmm} & group.2.1.2.0 & $C_2^2$& & & & & 1\\
       \textbf{p4}  & min.4.1.1.0   & $C_4$  & & & & 1\\
       \textbf{p4g} & max.2.1.1.1   & $D_4$  & & & & & & 1\\
       \textbf{p3}  & min.5.1.1.0   & $C_3$  & & & 1\\
    \end{tabular}
    \caption{Number of cofinite integral braces in dimension $2$ with the given additive group on the quotient by the socle. $G$ describes the multiplicative group of $\Gamma/\Soc(\Gamma)$. For the description of the asterisk character see Remark \ref{remark:quotient-brace-rump}.}
    \label{table:dimension-two-details}
\end{table}

\begin{table}[ht]
    \centering
    \begin{tabular}{l|l|c|cccccc}
    $\Gamma$        & CARAT         & $G$    & $1$ & $C_2$ & $C_3$ & $C_2^2$ & $C_4$ & $C_6$\\ \hline
    $\mathcal{G}_1$ & min.6.1.1.0   & $1$    & 1\\
    $\mathcal{G}_2$ & min.7.1.1.1   & $C_2$  & & 2\\
    $\mathcal{G}_3$ & min.13.1.1.1  & $C_3$  & & & 2\\
    $\mathcal{G}_4$ & min.11.1.1.1  & $C_4$  & & & & & 2\\
    $\mathcal{G}_5$ & group.18.1.1.1& $C_6$  & & & & & & 1\\
    $\mathcal{G}_6$ & min.10.1.1.7  & $C_2^2$& & & & 2 & 1\\
    $\mathcal{B}_1$ & min.8.1.1.1   & $C_2$  & & 2\\
    $\mathcal{B}_2$ & min.8.1.2.1   & $C_2$  & & 1\\
    $\mathcal{B}_3$ & min.9.1.1.6   & $C_2^2$& & & & 4 & 1\\
    $\mathcal{B}_4$ & min.9.1.1.7   & $C_2^2$& & & & 4 & 1\\
    \end{tabular}
    \caption{Number of cofinite integral braces on $3$-dimensional Bieberbach groups, with the given additive group on the quotient by the socle. $G$ describes the multiplicative group of $\Gamma/\Soc(\Gamma)$.}
    \label{table:dimension-three-details}
\end{table}

\section{Concluding remarks and open problems}\label{S:conclusions}

The results of this paper suggest two natural directions for further research on
cofinite integral braces.
\subsection{Crystallographic and Bieberbach groups without CIB structures}
In dimension $2$, exactly $12$ of the $17$ wallpaper groups admit a CIB structure (Theorem~\ref{T:wallpaper_CIB}). All four asymmorphic wallpaper groups do, and the five groups without a CIB structure, $\mathbf{p4m}$, $\mathbf{p3m1}$, $\mathbf{p31m}$, $\mathbf{p6}$ and $\mathbf{p6m}$, are symmorphic. In dimension $3$, all ten Bieberbach groups admit CIB structures \cite{Ru24}*{Theorem~4}. However, in dimension $4$, there exist two Bieberbach groups that admit no CIB structure (see Remark~\ref{asymmorphic} and Table~\ref{table:cibs-on-bieberbach-groups}). Moreover, Rump's observation that every $\mathbb{Z}$-reducible wallpaper group admits a CIB structure \cite{Ru24}*{Proposition~11} does not extend to dimension $3$ (Remark~\ref{reducible}). Theorem~\ref{theorem:classification} gives an effective criterion for the existence of a CIB structure on a given crystallographic group, but not a structural one.
\begin{problem}
Find an intrinsic group-theoretic or geometric characterization of those Bieberbach (and crystallographic) groups that admit at least one cofinite integral brace structure.
\end{problem}
Although five wallpaper groups admit no CIB structure, every wallpaper group is the multiplicative group of some brace \cite{Ru24}*{Corollary, p.~296}. Rump's argument is the following. If the holonomy group $G$ of a symmorphic crystallographic group $\Gamma=\mathbb{Z}^n\rtimes G$ is the multiplicative group of a finite brace $B$, then $\Gamma$ is the multiplicative group of the semidirect product $B\ltimes\mathbb{Z}^n$ of $B$ with the trivial brace $\mathbb{Z}^n$. Since the additive group $B\oplus\mathbb{Z}^n$ of this brace has torsion, it is not a cofinite integral brace. In dimension two this construction covers the five
wallpaper groups without CIB structures, all of which are symmorphic. It does not apply to asymmorphic groups, in particular not to the two four-dimensional Bieberbach groups without CIB structures. 
\begin{problem}
Is every crystallographic group the multiplicative group of some brace? In particular, are the two four-dimensional Bieberbach groups without CIB structures multiplicative groups of braces?
\end{problem}
\subsection{Connection to structure groups of set-theoretic solutions}
By Example~\ref{E:structure_group}, every structure group $G(X, r)$ of a solution $(X,r)$ carries a canonical cofinite integral brace structure; following \cite{Ru24} we denote this brace by $A_X$. Its additive group is free abelian with basis $X$, and every $\lambda_a$ permutes $X$. Hence, the degree map
\[
  \varepsilon\colon A_X\to\Z,\qquad \sum_{x\in X} n_x x\longmapsto\sum_{x\in X} n_x ,
\]
is a brace morphism onto the trivial brace $\Z$. Its kernel
$I_X=\ker\varepsilon$, which is additively spanned by the differences $x-y$ with
$x,y\in X$, is called the \emph{augmentation ideal} of $A_X$ \cite{Ru24}*{Example~2}. It is a cofinite integral brace of dimension $|X|-1$. Following \cite{Ru24}*{Definition~2}, a subset $X$ of a brace $A$ is a \emph{cycle base}
if $X$ generates $(A,+)$ and $\lambda_a(X)=X$ for all $a\in A$. Such an $X$, with $r(x,y)=\big(\lambda_x(y),\lambda^{-1}_{\lambda_x(y)}(x)\big)$, is a solution. Conversely, Rump showed that every cofinite integral brace has a finite cycle base $X$ and is a socle reduction of $A_X$, i.e. a quotient of $A_X$ by an ideal contained in $\Soc(A_X)$ \cite{Ru24}*{Theorem~2 and Corollary, p.~288}. The question is therefore when such a quotient can be avoided. By \cite{Ru24}*{p.~287}, a brace $A$ with a cycle base $X$ is isomorphic to $A_X$ if and only if $(A,+)$ is free abelian with basis $X$. Hence a CIB is isomorphic to the brace of a structure group if and only if its additive group has a $\mathbb{Z}$-basis invariant under $\lambda(A)$. For instance, exactly four of the ten three-dimensional Bieberbach groups are structure groups of solutions \cite{Ru24}*{Example~12}. The Hantzsche--Wendt group is not one of them, but it is the multiplicative group of the augmentation ideal $I_X$ of a suitable $A_X$ \cite{Ru24}*{Example~14}.
\begin{problem}
Which cofinite integral braces are isomorphic to the brace of the structure group $G(X,r)$ of some solution $(X,r)$, or equivalently, admit a $\lambda$-invariant basis of the additive group? Which Bieberbach groups arise as structure groups of solutions, or as multiplicative groups of augmentation ideals $I_X$?
\end{problem}

\section*{Data availability}

The results dataset has been deposited at

\href{https://doi.org/10.17632/knvjwht7w4.1}{Low dimensional cofinite integral braces (Mendeley data)}

\bibliography{bibliography}

\end{document}